\theoremstyle{plain}
\newtheorem{theorem}{Theorem}
\newtheorem{corollary}[theorem]{Corollary}
\newtheorem{lemma}[theorem]{Lemma}
\newtheorem{remark}[theorem]{Remark}
\newtheorem{proposition}[theorem]{Proposition}
\newtheorem{algorithm}[theorem]{Algorithm}
\newtheorem{conjecture}{Conjecture}%JK : new
\theoremstyle{definition}
\newtheorem{definition}[theorem]{Definition}
\theoremstyle{remark}
\numberwithin{theorem}{section} 
\newcommand {\Q}{{\mathbb{Q}}}
\newcommand {\Z}{{\mathbb{Z}}}
\newcommand {\N}{{\mathbb{N}}}
\newcommand {\F}{{\mathbb{F}}}
\newcommand {\OO}{{\mathcal{O}}}
\newcommand {\idf}{{\mathfrak{f}}}
\newcommand {\idp}{{\mathfrak{p}}}
\newcommand{\disc}     {{\mathfrak d}}%
\newcommand{\fm}     {{\mathfrak m}}%
\newcommand{\Disc}     {\mathop{\rm {Disc}}}%
\newcommand{\normal}     {\unlhd}%
\newcommand{\id}     {\mathop{\rm {id}}}%
\newcommand{\Gal}      {\mathop{\rm {Gal}}}
\newcommand{\Syl}      {\mathop{\rm {Syl}}}
\newcommand{\Nm}      {\mathop{\mathcal{N}}}
\newcommand{\Cl}      {\mathop{\rm {Cl}}\nolimits}
\newcommand{\ind}        {{\mathop{\rm ind}}}
\newcommand{\rk}        {{\mathop{\rm rk}}}
\newcommand{\eps}  {{\epsilon}}
\begin{document}

\title[$\ell$-torsion bounds]{$\ell$-torsion bounds for the class group of number fields with an  $\ell$-group as Galois group}

\author{J\"urgen Kl\"uners} \address{Universit\"at Paderborn, Institut f\"ur 
  Mathematik, Warburger Str. 100, 33098 Paderborn, Germany}

\email{klueners@math.uni-paderborn.de}

\author{Jiuya Wang}
\address{Duke University, Department of Mathematics, Durham, NC, 27708, US}
\email{wangjiuy@math.duke.edu}
\subjclass[2010]{Primary 11R29; Secondary 11R37}
\begin{abstract} 
	We describe the relations among the $\ell$-torsion conjecture, a conjecture of Malle giving an upper bound for the number of extensions, and the discriminant multiplicity conjecture. 
	We prove that the latter two conjectures are equivalent in some sense. Altogether, the three conjectures are equivalent for the class of solvable groups.
	     %We prove that the third conjecture implies the second %one. Furthermore, we show that the second conjecture %for all groups imply the third conjecture.
		We then prove the $\ell$-torsion conjecture for $\ell$-groups and the other two conjectures for nilpotent groups.
%	We prove the $\ell$-torsion conjecture for $\ell$-extensions and discriminant multiplicity conjecture for nilpotent extensions. Precisely, for any extension $E/F$ of number fields such that the normal closure has an $\ell$-group $G$ as Galois group,  we prove that the number of $\ell$-torsion elements of the class group of $E$ is bounded by $O_{F,\epsilon, [E:F]}(D^\eps)$ for all $\eps>0$, where $D$ denotes the norm of the discriminant of $E/F$. For any number fields $F$ and any permutation representation of nilpotent group $G\leq S_n$, we prove that the number of $G$-extension $E/F$ with $\disc(E/F) = n$ is bounded by $O_{\epsilon, F, G}( |n|^{\epsilon})$. 
\end{abstract}

\maketitle
 
\renewcommand{\theenumi}{(\roman{enumi})}

\section{Introduction}

In this paper, we study several conjectures in arithmetic statistics. In all situations $E/F$ will be an extension of degree $n$ of number fields with absolute discriminant $\Disc(E/F)$ which is the absolute norm of the discriminant ideal $\disc(E/F)$.

\begin{conjecture}[$\ell$-torsion conjecture, \cite{BruSil,Duk98,Zha05}]\label{conj:ell-torsion}
	 Let $\ell$ be a prime number and $E/F$ be an extension with absolute discriminant $D$ and degree $n = [E:F]$. Then the size $h_\ell(E)$ of the $\ell$-torsion in the class group of $E$ is
	\[h_\ell(E)= O_{\epsilon,n,\ell,F}(D^\epsilon)\mbox{ for all }\epsilon>0.\]
\end{conjecture}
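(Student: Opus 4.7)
The plan is to attack the conjecture via the Ellenberg--Venkatesh strategy, whose input is a supply of small completely split primes. Concretely, for any $\ida$ representing an $\ell$-torsion class one has $\ida^\ell=(\alpha)$ with $\alpha\in\OO_E$; if one can locate a prime $\idp$ of $\OO_E$ with small norm in the inverse class, then $\ida\idp$ is principal with a small generator, giving a small representative. Counting small ideals up to units and using the bound on the regulator then controls $h_\ell(E)$. The trivial baseline is Minkowski's bound $h_\ell(E)\le h(E)=O_{n,\epsilon}(D^{1/2+\epsilon})$, and the goal is to save the full factor of roughly $D^{1/2}$.

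First I would make precise the Ellenberg--Venkatesh bookkeeping: if there exist $\gg M$ primes of $E$ with norm at most $X$, then $h_\ell(E)\ll D^{1/2+\epsilon}/M$ provided $X$ is polylogarithmic in $D$. This reduces the conjecture to an analytic statement: for all sufficiently large $D$, one must find $\gg D^{1/2-\epsilon}$ prime ideals of $\OO_E$ of norm at most $(\log D)^A$. Even under GRH, effective Chebotarev only produces of order $(\log D)^A/\log\log D$ such primes, which is far short of $D^{1/2-\epsilon}$; so a direct Chebotarev attack cannot succeed and one must instead improve the representative-finding step itself, for instance by allowing products of several small primes and exploiting the $\ell$-torsion relation to gain additional savings.

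A complementary arm of the plan uses class field theory and induction on the degree. Write $\Cl(E)[\ell]$ as an extension of pieces indexed by intermediate fields $F\subseteq M\subseteq E$ via the norm and capitulation maps; using the results of the paper (the equivalences between Conjecture~\ref{conj:ell-torsion}, the Malle conjecture, and the discriminant multiplicity conjecture stated in the abstract), a bound on $\ell$-torsion in $\Cl(E)$ can be reformulated as a bound on the number of unramified $\ell$-extensions of $E$. One then tries to count such extensions directly, using counting results for nilpotent Galois groups over $E$ and bounding the root discriminants that arise by polylogarithmic factors in $D$. The hope is that for solvable Galois closures one can bootstrap from the nilpotent case proved in the paper.

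The main obstacle will be the non-solvable case: neither the analytic input nor the class field theoretic reformulation gives a handle on extensions whose Galois closure has a simple non-abelian composition factor. In particular, for a generic degree-$n$ extension $E/\Q$ with Galois group $S_n$, the strategy above produces no intermediate fields to induct on, and the Ellenberg--Venkatesh method leaves a polynomial gap. I expect that closing this gap requires genuinely new input --- either GRH-quality zero-free regions for $\zeta_E(s)$, or a new cohomological construction producing many small principal ideals in $E$ directly from the $\ell$-torsion data --- and that any unconditional proof will first need to sharpen the Brauer--Siegel-type estimates that currently dominate the error term.
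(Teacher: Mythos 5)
There is a genuine gap: what you have written is a research program, not a proof, and indeed the statement in question is an open conjecture that the paper itself does not prove in general --- it proves it only in the special case where $E/F$ is an $\ell$-extension (Theorem \ref{thm:ell-torsion-ell-extension}, made precise in Theorem \ref{thm:main1}). Your proposal does not establish even that special case. The paper's mechanism in that case is genus-theoretic and elementary: for a cyclic degree-$\ell$ step the generator of $\Gal(E/F)$ acts unipotently on $\Cl_E[\ell]$, so $\rk_\ell(\Cl_E)$ is at most $\ell$ times the number of Jordan blocks; the coinvariant quotient corresponds by class field theory to an abelian extension of $F$ whose ramification is confined to the places ramified in $E/F$, giving $\rk_\ell(\Cl_E)\leq \ell(e-1+\rk_\ell(\Cl_F))$ with $e$ essentially the number of ramified places (Theorem \ref{main0}); one then climbs a tower as in Lemma \ref{lem:tower} and uses $\omega(\Disc(E/F))\ll \log D/\log\log D$ (Proposition \ref{omega}) to get $h_\ell(E)\leq h_\ell(F)^n D^{O(1/\log\log D)}=O_\epsilon(D^\epsilon)$. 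Nothing in your proposal exploits this action of an $\ell$-group on its own $\ell$-torsion, which is the one situation where the conjecture is currently accessible.

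Two further concrete problems. First, your Ellenberg--Venkatesh reduction is quantified impossibly: there are at most $O_{n}((\log D)^{An})$ ideals of $\OO_E$ of norm at most $(\log D)^A$, so one can never produce $\gg D^{1/2-\epsilon}$ such primes; the actual Ellenberg--Venkatesh method saves only a fixed small power of $D$, far from $D^{1/2}$, so this arm cannot yield the conjecture as you set it up. Second, your class-field-theoretic arm is circular relative to the paper's architecture: the reduction of $\ell$-torsion to discriminant multiplicity (Proposition \ref{prop1}(i)) requires Conjecture \ref{conj:disc-multi} for subgroups of degree $n\ell$ of the form arising from $C_\ell\wr G$, which are nilpotent essentially only when $G$ is an $\ell$-group; and in the paper the nilpotent counting results (Theorems \ref{thm:disc-multi-nilpotent} and \ref{thm:weakmalle}) are themselves deduced \emph{from} the $\ell$-torsion bound for $\ell$-extensions, not the other way around. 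So bootstrapping from the paper's nilpotent theorems cannot reach general solvable, let alone non-solvable, $E/F$, and, as you yourself note, the general case remains out of reach; acknowledging the obstruction does not close it.
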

We write $\Gal(E/F)=G \leq S_n$ if the Galois group of the Galois closure $\hat{E}/F$ of $E/F$ viewed as
permutation group on the set of embeddings of $E$ into $\hat{E}$ is permutation isomorphic to $G$.
In our second conjecture we consider the function
\[N(F,G;X):=\#\{E/F\mid \Gal(E/F)=G, \Disc(E/F)\leq X \}.\]  By a famous result of Hermite (e.g. see \cite[Theorem 2.24, page 68]{Nar04}) there are only finitely many number fields with the same given discriminant. 
%This implies that $N(F,G;X)$ is finite for all $G, F, X$. 
\begin{definition}\label{def:ind}
	Let $G\leq S_n$ be a transitive group acting on $\Omega=\{1,\ldots,n\}$.
	\begin{enumerate}
		\item For $g\in G$ we define the index $\ind(g):= n- \mbox{ the number of orbits of $g$ on }\Omega.$
		\item For $n>1$ let $a(G):=\ind(G):=\min\{\ind(g): \id\ne g\in G\}.$
	\end{enumerate}
\end{definition}
Note that $a(G)$ here is the inverse of the $a(G)$ defined in \cite{Mal02}.
\begin{conjecture}[Malle's Conjecture (weak version of the upper bound), \cite{Mal02}]\label{conj:Mal1}
	Let $F$ be a number field and  $G\leq S_n$ be a transitive group. Then we have
	\[N(F,G;X)=O_{\epsilon,F}(X^{1/a(G)+\epsilon}) \mbox{ for all }\epsilon>0.\]
\end{conjecture}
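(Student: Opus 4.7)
The plan is to prove Malle's conjecture by induction on the composition length of $G$, using class field theory at each abelian step. For abelian $G$ acting regularly on $\Omega$, the extensions $E/F$ with $\Gal(E/F)=G$ correspond via class field theory to surjective homomorphisms from a ray class group of $F$ onto $G$, modulo $\Aut(G)$; the conductor-discriminant formula turns the discriminant bound into a character-sum count that yields $N(F,G;X) = O_{F,\epsilon}(X^{1/a(G)+\epsilon})$ (work of Wright, and M\"aki for the sharp exponent), with $1/a(G)$ dictated by the element of $G$ contributing the smallest local conductor exponent relative to its index in the regular representation.

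For the inductive step, I would choose a nontrivial normal subgroup $N$ of $G$ with quotient $Q = G/N$. Every $G$-extension $E/F$ sits inside a Galois closure $\hat E$ containing a $Q$-subextension $K = \hat E^N$, and the tower formula
\[\disc(\hat E/F) = \Nm_{K/F}(\disc(\hat E/K))\cdot \disc(K/F)^{[\hat E:K]}\]
gives a lower bound on $\Disc(\hat E/F)$ in terms of $\Disc(K/F)$. Summing over $K$, one estimates $N(F,G;X)$ by applying the inductive hypothesis to $Q$ over $F$ for the outer count and to $N$ over each $K$ for the inner count. For nilpotent $G$ the lower central series provides exactly such a tower with abelian quotients, and $a(G)$ decomposes compatibly along the filtration, so the induction closes with the claimed exponent.

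The main obstacle is twofold. First, for nonsolvable $G$ the composition factors include nonabelian simple groups, where class field theory gives no grip; matching the sharp exponent $1/a(G)$ is presently out of reach (even $a(G)$ for $G=S_5$ is only known via Bhargava's geometry of numbers, without uniformity over the base field). Second, even in the solvable case the inner counts run over a varying base field $K$ whose discriminant can grow with $X$, so one needs an effective version of the abelian result in which the implicit constants depend on $K$ only polynomially in $\Disc(K/F)$. A further delicate point is wild ramification at primes $p \mid |N|$: the local conductor exponents deviate from the tame prediction and, without precise local mass computations in the style of Serre and Bhargava, can in principle inflate the count past the conjectured bound, so one has to verify that the worst-case local contribution still corresponds to an element of index $a(G)$.
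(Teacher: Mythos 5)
This statement is a conjecture, not a theorem of the paper: the paper proves it only for nilpotent $G$ (Theorem \ref{thm:weakmalle}, via Corollary \ref{cor:weakmalle}). Your proposal attempts the general case and, as you yourself acknowledge at the end, does not close: there is no class-field-theoretic grip on nonsolvable composition factors, and you leave the wild-ramification and base-field-uniformity issues unresolved. What you have is a plan with correctly identified obstructions, not a proof; the general conjecture remains open.

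Comparing with the paper on the case it does settle (nilpotent $G$), your route differs in a way that matters. You propose a direct induction on $N(F,G;X)$ along a tower with abelian quotients, asserting that ``$a(G)$ decomposes compatibly along the filtration.'' That claim is unjustified and is the real gap: $a(G)=\min\{\ind(g):g\ne 1\}$ is a global minimum over the whole permutation group, and there is no obvious way it behaves functorially through a central series, so tracking the exponent $1/a(G)$ step by step is precisely where a direct attack becomes delicate. The paper sidesteps this entirely. It first proves Conjecture~\ref{conj:disc-multi} (discriminant multiplicity, target exponent $\epsilon$, with $a(G)$ playing no role) for $\ell$-groups by inducting on the $C_\ell$-tower of Lemma \ref{lem:tower}; the varying-base issue you correctly flag is handled by the $\ell$-torsion bound $h_\ell(K)=O_\epsilon(\Disc(K/F)^\epsilon)$ of Theorem \ref{thm:main1}, which supplies exactly the polynomial dependence on the intermediate discriminant that your sketch needs but does not establish. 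It then reduces nilpotent to $\ell$-groups as \emph{permutation} groups (Lemma \ref{lem:p-grp-2-nilpotent}), which requires some care beyond the abstract Sylow decomposition. The exponent $1/a(G)$ enters only at the last step (Theorem \ref{thm:C-B}), through an elementary Dirichlet-series count of integers $D$ with the property $p\mid D\Rightarrow p^{a(G)}\mid D$, entirely independent of any tower. Thus the paper never needs $a(G)$ to ``decompose'' at all; if you want to pursue your tower approach you would either have to prove such compatibility, which is not straightforward, or switch to proving discriminant multiplicity first, as the paper does.
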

In the same paper Malle conjectures a lower bound which is equivalent to \[\liminf_{X\rightarrow\infty} X^{-1/a(G)}N(F,G;X)>0.\]
We remark that Malle gives a refined version of the conjecture in \cite{Mal04}, which we do not need in our context here. There are also counter-examples known due to the first author \cite{Kl05} for this refined conjecture.

In our last conjecture we consider the number
\[a_D:=\#\{E/F\mid \Gal(E/F)=G, \Disc(E/F)=D \}\]
 of $G$-extensions of $F$ with discriminant $\Disc(E/F)=D$.
\begin{conjecture}[Discriminant Multiplicity Conjecture, \cite{Duk98,EV05}]\label{conj:disc-multi}
	Let $F$ be a number field and $G\leq S_n$ be a transitive group. Then for all $D\in\N$ we have
		\[a_D = O_{\epsilon,F,n}(D^{\epsilon})\mbox{ for all }\eps>0.\]
\end{conjecture}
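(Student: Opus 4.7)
The plan is to reduce the bound on $a_D$ to a local-to-global decomposition and control the two pieces separately. If $E/F$ is a $G$-extension with $\Disc(E/F)=D$, the primes of $F$ ramified in $E$ lie over rational primes dividing $D$, so their number is at most $\omega(D)=O(\log D/\log\log D)$. The completions $E\otimes_F F_{\idp}$ for $\idp\mid D$ are \'etale $F_{\idp}$-algebras of degree $n$ whose local discriminant exponents $e_{\idp}$ satisfy $\sum_{\idp\mid D} e_{\idp}\log N(\idp)=\log D$.

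The first step counts the possible local ramification patterns. By Krasner's theorem, for each $\idp$ there are only finitely many \'etale $F_{\idp}$-algebras of degree $n$ with a prescribed discriminant exponent, and the count is bounded by an explicit function of $n$, the residue characteristic, and the exponent (for instance via Serre's mass formula). Summing over the divisor-style choices of how to distribute the valuation data of $D$ among the ramified primes then yields an $O_\epsilon(D^\epsilon)$ bound on the total number of admissible local patterns.

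The second step bounds, for each fixed local pattern, the number of global $G$-extensions realizing it. For abelian $G$ this is a direct class-field-theoretic count: an extension with prescribed local conductors corresponds to a surjection from a ray class group of bounded modulus onto $G$, and standard bounds on ray class numbers give an $O_\epsilon(D^\epsilon)$ estimate. For solvable $G$ one iterates along a derived series, writing the extension as a tower of abelian pieces and controlling the intermediate discriminants via the conductor-discriminant formula; this is essentially the strategy the paper later deploys for nilpotent $G$, but for general solvable $G$ it still goes through because each abelian layer contributes a factor of $O_\epsilon(D^\epsilon)$ and the number of layers is bounded in terms of $n$.

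The main obstacle is the global step when $G$ is not solvable. Without class field theory there is no direct handle on the number of $G$-extensions with prescribed local behavior at each ramified prime, and whatever one can say reduces to a statement of essentially the same strength as Malle's upper bound (Conjecture~\ref{conj:Mal1}). In particular, for simple~$G$ the plan above yields only a conditional statement, which is consistent with the equivalence announced in the abstract: an unconditional proof of Conjecture~\ref{conj:disc-multi} in full generality seems to require breakthroughs comparable to those needed for Malle's conjecture, and the realistic deliverable of this approach is the unconditional case of solvable $G$ together with a clean reduction of the non-solvable case to Conjecture~\ref{conj:Mal1}.
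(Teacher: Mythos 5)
The statement you are proving is a conjecture, and the paper itself only establishes it unconditionally for nilpotent $G$ (Theorem \ref{thm:disc-multi-nilpotent}), together with equivalences to Conjectures \ref{conj:ell-torsion} and \ref{conj:Mal1} in the solvable setting. Your proposal over-claims at exactly the point where the real difficulty sits: the assertion that for solvable $G$ ``each abelian layer contributes a factor of $O_\epsilon(D^\epsilon)$'' is not justified. When you count abelian extensions of an intermediate field $E_i$ with prescribed conductor by class field theory, the bound you get is governed by the torsion of a ray class group of $E_i$, i.e.\ it is of the shape $h_\ell(E_i)\cdot \ell^{\omega(\idf)}$ (as in Lemma \ref{specialcase}); the factor $\ell^{\omega(\idf)}$ is harmless, but you must also bound $h_\ell(E_i)$ by $\Disc(E_i)^\epsilon$, and that is precisely the open $\ell$-torsion conjecture whenever $E_i/F$ is not itself an $\ell$-extension. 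For a general solvable tower (say a $C_2$-layer on top of a $C_3$-field) no genus-theory argument controls this torsion, so your induction does not close. This is exactly why the paper restricts the unconditional result to nilpotent $G$: Lemma \ref{lem:p-grp-2-nilpotent} splits a nilpotent extension into $\ell$-extensions prime by prime, and within an $\ell$-tower every intermediate field is again an $\ell$-extension of $F$, so Theorem \ref{thm:main1} supplies the needed bound $h_\ell(E_i)=O_\epsilon(\Disc(E_i)^\epsilon)$. Note also that if your solvable claim were correct, Theorem \ref{thm:C-B} would immediately give the weak Malle upper bound unconditionally for all solvable groups, whereas this is only known conditionally (Alberts proves it assuming average $\ell$-torsion bounds), so the claim cannot be right as stated.

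A secondary imprecision: your proposed ``clean reduction of the non-solvable case to Conjecture \ref{conj:Mal1}'' is vaguer than what is actually needed. The paper's converse direction (Theorem \ref{thm:malle-2-disc-multi}) does not deduce Conjecture \ref{conj:disc-multi} for $G$ from Malle's bound for $G$ alone; it needs Conjecture \ref{conj:Mal1} for all allowable permutation subgroups of $G^k$ for every $k$, because one must control all higher moments of the sequence $a_D$ (Lemma \ref{lem:k-moments-uniformity}), and a bound on the first moment $N(F,G;X)$ says nothing about individual multiplicities. Your local-pattern count via Krasner and the mass formula is fine but plays the same role as the paper's elementary divisor-counting estimates and does not touch either difficulty.
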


The goal of this paper is twofold: on one hand, to show the relations among Conjectures \ref{conj:ell-torsion}, \ref{conj:Mal1}, and \ref{conj:disc-multi}; on the other hand, to show that these conjectures have affirmative answers when we restrict our discussion to certain general families of number fields. 

These conjectures share the common feature that they are all about giving upper bounds on the number of arithmetic objects, including number fields and class numbers. We will show that they are almost all equivalent to each other.
\begin{proposition}\label{prop1}
	Let $F$ be a number field. Then
	\begin{enumerate}
		\item Assume that Conjecture \ref{conj:disc-multi} is true for all $G\leq S_{n\ell}$, then Conjecture \ref{conj:ell-torsion} is true for $\ell$ and for all extensions $E/F$ of degree $n$.
		\item Assume that Conjecture \ref{conj:ell-torsion} is true for all solvable extensions $E/F$ and all prime numbers $\ell$. Then Conjecture \ref{conj:Mal1} is true for all solvable groups $G$.
	\end{enumerate}
\end{proposition}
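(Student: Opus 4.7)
My plan for part~(i) is to use class field theory to translate the $\ell$-torsion of $\Cl(E)$ into a count of degree-$n\ell$ extensions of $F$ with a fixed discriminant, which is precisely what Conjecture~\ref{conj:disc-multi} controls. For $E/F$ of degree $n$ with $D = \Disc(E/F)$, the identity $|\Cl(E)/\ell\Cl(E)| = h_\ell(E)$ produces $(h_\ell(E)-1)/(\ell-1)$ unramified cyclic degree-$\ell$ extensions $K/E$; each of these satisfies $[K:F] = n\ell$ and $\Disc(K/F) = D^\ell$ by the tower formula together with $\disc(K/E) = (1)$. Since a fixed $F$-field $M$ of degree $n\ell$ admits at most $[E:F] = n$ distinct $F$-embeddings of $E$, passing from $K/E$ to $K$-as-$F$-field loses at most a factor $n$, so
\[
\frac{h_\ell(E)-1}{n(\ell-1)} \;\leq\; \#\{M/F : [M:F]=n\ell,\; \Disc(M/F)=D^\ell\} \;=\; \sum_{\substack{G\leq S_{n\ell}\\ \text{transitive}}} a_{D^\ell}(G).
\]
The outer sum has only finitely many terms, so applying Conjecture~\ref{conj:disc-multi} term by term and then replacing $\epsilon$ by $\epsilon/\ell$ yields $h_\ell(E) = O_{\epsilon,n,\ell,F}(D^\epsilon)$.

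For part~(ii) the plan is induction on $|G|$, with abelian $G$ as the base case (handled by Wright's count of abelian extensions, which requires no $\ell$-torsion input). For the inductive step, let $G \leq S_n$ be a non-abelian solvable transitive group and let $N \trianglelefteq G$ be a minimal normal subgroup, which by solvability is elementary abelian of prime-power order $\ell^k$. Each $G$-extension $E/F$ produces its Galois closure $\hat E/F$ and the intermediate field $K := \hat E^N$; the tower is $F \subseteq K \subseteq \hat E$, with $K/F$ a Galois $G/N$-extension and $\hat E/K$ an $N$-Galois abelian $\ell$-extension. The count will proceed by choosing $K$ first, using the inductive Malle estimate for $G/N$ (in its regular representation, say), and then bounding the number of $\hat E/K$ lying over each such $K$.

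The control of $\hat E/K$ is where the $\ell$-torsion hypothesis enters: $N$-extensions of $K$ with conductor dividing $\mathfrak{f}$ correspond to surjections $\Cl_{\mathfrak{f}}(K) \twoheadrightarrow N$, and so number at most $|\Cl_{\mathfrak{f}}(K)[\ell]|^k$. The standard exact sequence relating $\Cl_{\mathfrak{f}}(K)$, $\Cl(K)$, and $(\OO_K/\mathfrak{f})^\times$ expresses $\Cl_{\mathfrak{f}}(K)[\ell]$ in terms of $h_\ell(K)$ and an $\ell$-contribution controlled by $\mathfrak{f}$. Since $K/F$ is solvable, Conjecture~\ref{conj:ell-torsion} gives $h_\ell(K) = O(\Disc(K/F)^\epsilon)$, while the local factors are polynomial in the norm of $\mathfrak{f}$. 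Summing over $\mathfrak{f}$, then over $K$ via the inductive bound, and matching up the exponents should deliver $N(F,G;X) = O(X^{1/a(G)+\epsilon})$. The main obstacle is the discriminant bookkeeping: one has to trace the non-Galois invariant $\Disc(E/F)$ through the tower $F\subseteq K\subseteq \hat E$ to the data $(\Disc(K/F), \mathfrak{f})$, and verify that summing over conductors, once weighted correctly against their contribution to $\Disc(E/F)$, reproduces precisely the Malle exponent $1/a(G)$. The delicate case is wild ramification in $\hat E/K$, which can inflate $\Disc(\hat E/F)$ far more than $\Disc(K/F)$ or $\mathfrak{f}$ alone would suggest, and aligning the resulting loss with the combinatorial index $a(G)$ is where the real technical work lies.
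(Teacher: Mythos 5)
The paper's own ``proof'' of this proposition is a pair of citations: part~(i) to \cite{EV05} and \cite{PTBW2}, part~(ii) to \cite{Alb}. So the comparison here is really against the cited literature.

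Your part~(i) is essentially the correct argument and matches what \cite{EV05} and \cite{PTBW2} do: pass from $\Cl_E[\ell]$ to unramified cyclic degree-$\ell$ extensions $K/E$, observe $\Disc(K/F)=D^\ell$, absorb the at-most-$n$ embeddings of $E$ into a fixed $K$, and sum $a_{D^\ell}(G)$ over the finitely many transitive $G\le S_{n\ell}$. (A small wording nit: you should count subfields of $M$ isomorphic to $E$ rather than ``embeddings,'' but the bound $\le n$ is the same.) This part is complete.

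Part~(ii) is not a proof. You correctly identify Alberts's strategy --- induct on $|G|$ via a minimal normal (elementary abelian $\ell$-) subgroup $N$, count $K=\hat E^N$ by the inductive Malle bound, and count $\hat E/K$ via ray class groups, with the $\ell$-torsion hypothesis controlling $h_\ell(K)$ --- but you then explicitly defer the entire content: ``matching up the exponents should deliver $N(F,G;X)=O(X^{1/a(G)+\epsilon})$'' and ``aligning the resulting loss with the combinatorial index $a(G)$ is where the real technical work lies.'' That bookkeeping is the theorem. In particular, two things you wave at are genuinely nontrivial and not resolved by your sketch: (a) you propose to count $K/F$ by $\Disc$ in the \emph{regular} representation of $G/N$, but the Malle exponent depends on the permutation representation, and $a(G/N,\text{regular})$ bears no a priori relation to $a(G)$ in the given representation of degree $n$; reconciling these is a real combinatorial lemma in \cite{Alb}, not a formality. (b) Malle's conjecture counts by $\Disc(E/F)$ for the non-Galois $E$, not $\Disc(\hat E/F)$ or $\Disc(K/F)$; translating between these through the tower $F\subseteq K\subseteq \hat E$, and then summing over conductors $\mathfrak{f}$ weighted against their contribution to $\Disc(E/F)$, is precisely where the exponent $1/a(G)$ has to emerge, and you leave it open. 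As written, part~(ii) is an accurate summary of the \emph{approach} of \cite[Cor.~1.6]{Alb} but does not constitute an independent proof of it.
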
 
\begin{proof}
	Part (i) is shown in \cite[p. 164]{EV05} and \cite[Thm 1.7]{PTBW2}.  The second part is \cite[Cor. 1.6]{Alb}.
\end{proof}
  A little bit stronger Alberts proves in \cite[Corollary 1.4]{Alb} that Conjecture \ref{conj:Mal1} is true for solvable groups, if we assume that the torsion conjecture is true in average.   

Noticing that Conjecture \ref{conj:Mal1} is essentially an average statement of  Conjecture \ref{conj:disc-multi}, it is not surprising that in general Conjecture \ref{conj:disc-multi} implies Conjecture \ref{conj:Mal1}. We manage to prove that they are equivalent.
\begin{theorem}\label{thm:1}
	Let $F$ be a number field. Then 
\begin{enumerate}
\item 
Conjecture \ref{conj:Mal1} for all finite groups $G$ implies Conjecture \ref{conj:disc-multi} for all finite groups $G$.
\item Conjecture \ref{conj:disc-multi} for $G\leq S_n$ implies 
Conjecture \ref{conj:Mal1} for $G$.
\end{enumerate}	
\end{theorem}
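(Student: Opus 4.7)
I will treat the two parts separately, beginning with the shorter Part~(ii). The approach there is structural. Every discriminant $D=\Disc(E/F)$ of a $G$-extension has the property that each rational prime dividing $D$ does so to exponent at least $a(G)$: if $\idp$ is a prime of $F$ tamely ramified in $E$ with inertia generator $g_\idp\neq\id$, then $v_\idp(\disc(E/F))=\ind(g_\idp)\geq a(G)$ by Definition~\ref{def:ind}, and wild ramification can only increase this valuation. A standard divisor-sum estimate (writing such $D$ as $P^{a(G)}Q$ with $P$ squarefree of size $\leq X^{1/a(G)}$ and $Q$ supported on the prime divisors of $P$) bounds the number of such integers $D\leq X$ by $O_\epsilon(X^{1/a(G)+\epsilon})$. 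Combining with Conjecture~\ref{conj:disc-multi} in the form $a_D\ll_\epsilon D^\epsilon$ yields
$$
N(F,G;X)\;=\;\sum_{\substack{D\leq X\\ D\text{ is a }G\text{-discriminant}}} a_D \;\ll_\epsilon\; X^{1/a(G)+2\epsilon},
$$
which is Conjecture~\ref{conj:Mal1} for $G$.

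For Part~(i), my plan is a compositum argument in which DMC for $G$ is deduced from Conjecture~\ref{conj:Mal1} applied to a larger auxiliary group built from $G$. Fix $G\leq S_n$ and $D$; set $N:=a_D$ and let $E_1,\ldots,E_N$ be the $G$-extensions of $F$ of discriminant $D$. For each $k\geq 1$ I will consider ordered $k$-tuples of linearly disjoint extensions drawn from this list --- these are extractable in abundance by a greedy selection, provided $N$ is large enough relative to $k$ --- and form the compositums $L=E_{i_1}\cdots E_{i_k}$ inside a fixed algebraic closure of $F$. Each such $L$ is a field of degree $n^k$ whose Galois group over $F$ is $G^k$ acting in the product permutation representation on $n^k$ points; an orbit calculation gives $a(G^k)=a(G)\cdot n^{k-1}$ in this action, and the tensor discriminant formula gives $\Disc(L/F)=D^{kn^{k-1}}$. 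Since each compositum $L$ arises from at most $k!$ ordered tuples (via the $S_k$-symmetry permuting the $k$ canonical index-$n$ subfields $E_{i_j}\subset L$), applying Conjecture~\ref{conj:Mal1} to $G^k$ in its product action yields an inequality of the shape
$$
N(N-1)\cdots(N-k+1) \;\ll_{k,\epsilon}\; D^{kn^{k-1}/a(G^k)+\epsilon} \;=\; D^{k/a(G)+\epsilon}.
$$

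The decisive technical obstacle is that the inequality just displayed, upon extracting a $k$-th root and sending $k\to\infty$, recovers only the Malle-type bound $N\ll_\epsilon D^{1/a(G)+\epsilon}$: the quotient $kn^{k-1}/a(G^k)$ equals $k/a(G)$ independently of $k$, so a direct application of Malle's conjecture to $G^k$ in its product action cannot cross the Malle barrier of exponent $1/a(G)$. To close the remaining gap to the $D^\epsilon$ required by Conjecture~\ref{conj:disc-multi}, the argument must exploit the crucial constraint that the $E_i$ all ramify at the same primes with the same local discriminant exponents: this forces the inertia of the compositum to lie along a diagonal-type subgroup of $G^k$, so that Malle's conjecture can instead be applied to a substantially smaller subdirect subgroup $H\leq G^k$ (or to a carefully chosen wreath product) whose Malle invariant strictly exceeds $a(G)\cdot n^{k-1}$. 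Identifying the right auxiliary family of groups so that the compositums constructed above all fall inside it, and tracking how the Malle exponent scales with $k$ in this family, is where I expect the main difficulty of the proof to lie.
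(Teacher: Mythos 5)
Your treatment of part (ii) is correct and essentially the same as the paper's: both observe that a $G$-discriminant $D$ must have $p^{a(G)}\mid D$ for every prime $p\mid D$, count such integers up to $X$ (yielding $O_\epsilon(X^{1/a(G)+\epsilon})$), and then multiply by the DMC bound. The paper packages the integer count via a Dirichlet series and a Tauberian theorem, but explicitly notes that an elementary argument like yours suffices.

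For part (i) you have the right skeleton --- compositum of $k$ extensions, apply Malle's conjecture to an auxiliary group inside $G^k$, extract a $k$th root --- but the ``decisive technical obstacle'' you diagnose is an artifact of the wrong discriminant estimate, and your proposed fix (diagonal-type or wreath subgroups with a strictly larger Malle invariant) is not what closes the argument. Since all the $E_i$ ramify only at primes dividing $D$, the compositum $L$ does too, and the trivial tame bound $v_\idp(\disc(L/F))<[L:F]$ at every $\idp\mid p\mid D$ already gives $\Disc(L/F)\le C_2 D^{[L:\Q]}$; you should use this in place of the coprime-tensor formula $D^{kn^{k-1}}$, which is actually the \emph{weaker} bound once $k > [F:\Q]\,n$. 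Writing $H=\Gal(L/F)$ and $[L:\Q]=[F:\Q]\deg(H)$, Malle for $H$ applied at discriminant $\le C_2 D^{[F:\Q]\deg(H)}$ gives $O\bigl(D^{[F:\Q]\deg(H)/a(H)+\epsilon}\bigr)$; and for $H=G^k$ one computes $\deg(G^k)/a(G^k)=n^k/(a(G)n^{k-1})=\deg(G)/a(G)$, \emph{independent of} $k$. So the exponent is a constant $C_1=[F:\Q]\deg(G)/\ind(G)$ for every $k$, and $a_D^k\ll_k D^{C_1+\epsilon}$ then yields $a_D\ll D^{C_1/k+\epsilon/k}\to D^\epsilon$: no ``smaller'' subgroup is needed. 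Two further deviations from the paper are worth noting. First, your restriction to linearly disjoint $E_i$, together with the greedy-extraction claim, is unsubstantiated (it can fail, e.g.\ when many $E_i$ share a common subfield); the paper avoids this by considering \emph{all} tuples and introducing ``allowable permutation subgroups'' $H\leq G^k$ (Definition~\ref{def:allow}), for which the crucial inequality $\deg(H)/\ind(H)\le\deg(G)/\ind(G)$ is proved in Lemma~\ref{lem:deg-over-ind-uniformity}. Second, rather than fixing $D$ and extracting a root, the paper bounds the moments $\sum_{D\le X}a_D^k=O_k(X^{C_1+\epsilon})$ uniformly and then invokes a Chebyshev-type equivalence (Lemma~\ref{lem:k-moments-uniformity}) to convert a $k$-independent exponent in the moment bound into the pointwise bound $a_D=O_\epsilon(D^\epsilon)$; this cleanly handles the dependence of the implied constants on $k$.
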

The first part is shown in Theorem \ref{thm:C-B} and the second part in Theorem \ref{thm:malle-2-disc-multi}.
% We remark that a similar result is proved in \cite[Prop. 4.8]{EV05}, but they count number fields by the \emph{radical of discriminant} instead of the discriminant.

We remark that Proposition \ref{prop1} (i) and Theorem \ref{thm:1} (i) will be also true for the class of solvable extensions, see Corollary \ref{cor:disc}. Therefore we get: 
\begin{corollary}
	Conjectures \ref{conj:ell-torsion}, \ref{conj:Mal1}, and \ref{conj:disc-multi} are equivalent when we restrict to solvable extensions $E/F$.
\end{corollary}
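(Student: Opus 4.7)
The plan is to close a cycle of three implications among the conjectures, all restricted to solvable data, so that each conjecture implies the next. Specifically, I would verify the chain
\[
\text{Conj.~\ref{conj:Mal1} (solvable)} \;\Longrightarrow\; \text{Conj.~\ref{conj:disc-multi} (solvable)} \;\Longrightarrow\; \text{Conj.~\ref{conj:ell-torsion} (solvable)} \;\Longrightarrow\; \text{Conj.~\ref{conj:Mal1} (solvable)},
\]
which by transitivity gives the equivalence.

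For the first arrow, I would appeal to the solvable version of Theorem~\ref{thm:1}(i), which is exactly the content of the Corollary~\ref{cor:disc} flagged in the remark: assuming Malle's upper bound for every solvable transitive permutation group, one deduces the discriminant multiplicity bound for every solvable transitive group. For the second arrow, the solvable version of Proposition~\ref{prop1}(i) (again subsumed under Corollary~\ref{cor:disc}) gives that if Conjecture~\ref{conj:disc-multi} holds for every solvable $G \leq S_{n\ell}$, then Conjecture~\ref{conj:ell-torsion} holds for every solvable extension $E/F$ of degree $n$, for every prime $\ell$. The third arrow is precisely Proposition~\ref{prop1}(ii): assuming the $\ell$-torsion bound for all solvable $E/F$ and all $\ell$, one recovers Malle's upper bound for every solvable group. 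Concatenating, all three conjectures become equivalent on the solvable class.

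The only real content to check is that the reductions behind the first two arrows never leave the solvable world. In the Ellenberg--Venkatesh / PTBW-type proof behind Proposition~\ref{prop1}(i), $h_\ell(E)$ is controlled by counting certain auxiliary degree-$\ell$ extensions $L/E$; the Galois closure of $L/F$ sits inside a wreath-product type extension of $\mathrm{Gal}(\hat E/F)$ by $C_\ell$, which is solvable whenever $G$ is, so the relevant auxiliary groups in $S_{n\ell}$ are still solvable. Similarly, the reduction from Malle to discriminant multiplicity in Theorem~\ref{thm:1}(i) proceeds through subgroups and quotients of $G$ and of ambient wreath products, operations that preserve solvability. This closure-under-solvability is the only step requiring care; everything else is bookkeeping of the cycle. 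Hence the corollary follows immediately from the three referenced statements once Corollary~\ref{cor:disc} has been established.
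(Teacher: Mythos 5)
Your proposal is correct and follows the same three-arrow cycle the paper uses (Malle $\Rightarrow$ disc-multi via Corollary~\ref{cor:disc}, disc-multi $\Rightarrow$ $\ell$-torsion via the solvable version of Proposition~\ref{prop1}(i), $\ell$-torsion $\Rightarrow$ Malle via Proposition~\ref{prop1}(ii), i.e.\ Alberts), and you correctly identify the one real issue to check, namely that the reductions stay inside the solvable class. One small inaccuracy: the reduction in Theorem~\ref{thm:1}(i) (Theorem~\ref{thm:malle-2-disc-multi}) does not pass through wreath products but through allowable permutation subgroups of the direct power $G^k$; the solvability preservation you need there is simply that subgroups of $G^k$ are solvable when $G$ is, which is what the proof of Corollary~\ref{cor:disc} records. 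The wreath-product observation ($C_\ell \wr H$ solvable when $H$ is) is the one relevant to the solvable version of Proposition~\ref{prop1}(i), and that part of your argument matches the paper's remark.
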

We remark that we do not expect  that the similar statement for nilpotent extensions $E/F$ is true. The reason is that if you want to consider $\ell$--torsion  of the class group for $p$--extensions, then the resulting Galois groups are solvable, but in most cases not nilpotent.

We then focus on proving some special cases of these conjectures. Conjecture \ref{conj:ell-torsion} has a clear dichotomy depending on whether $\Cl_E[\ell]$ is randomly distributed. Classically, it is only known to be true when $F=\Q$ and $\ell = 2$ for $[E:F] = 2$ by Gauss using genus theory. Firstly, we give a compact proof on all cases for Conjecture \ref{conj:ell-torsion} where a similar argument with $\ell=2$ for quadratic extensions applies, i.e., when the distribution of $\Cl_E[\ell]$ is governed by genus theory. We say a field extension $E/F$ is an $\ell$-extension when  $\Gal(E/F)$ is an $\ell$-group. 
\begin{theorem}\label{thm:ell-torsion-ell-extension}
	Conjecture \ref{conj:ell-torsion} holds for the $\ell$-torsion of class groups of $\ell$-extensions.
	\end{theorem}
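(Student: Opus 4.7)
The plan is to bound $h_\ell(E) \le \ell^{\rk_\ell \Cl_E}$ by showing that $\rk_\ell \Cl_E \ll_{n,F,\ell} \log D/\log\log D$, via classical genus theory along a tower of cyclic degree-$\ell$ extensions from $F$ up to $E$, together with the Hardy--Ramanujan estimate $\omega(N) \ll \log N/\log\log N$. This is the natural generalisation of Gauss's quadratic genus-theoretic argument: at each step of the tower the $\ell$-rank grows by at most a factor of $\ell$ plus the number of newly ramified primes.

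First I would construct the tower. Set $G := \Gal(\hat E/F)$ and $H := \Gal(\hat E/E)$. Since $G$ is nilpotent (being an $\ell$-group), $H$ is subnormal in $G$; choose a chain $H = H_0 \triangleleft H_1 \triangleleft \cdots \triangleleft H_m = G$ with each $H_{i+1}/H_i$ cyclic of order $\ell$, and let $E = E_0 \supset E_1 \supset \cdots \supset E_m = F$ be the corresponding fixed-field tower. Then each step $E_i/E_{i+1}$ is cyclic of degree $\ell$ (the intermediate fields need not be Galois over $F$, which is not required). For each such step I would establish the per-step bound
\[\rk_\ell \Cl_{E_i} \le \ell\bigl(\rk_\ell \Cl_{E_{i+1}} + t(E_i/E_{i+1})\bigr),\]
where $t(L/K)$ counts primes of $K$ ramifying in $L$, by combining (a) the classical genus-theoretic fact that the ambiguous class group $\Cl_{E_i}^{\Gal(E_i/E_{i+1})}$ is generated by the image of $\Cl_{E_{i+1}}$ together with the classes of primes of $E_i$ above ramified primes of $E_{i+1}$, yielding $\rk_\ell \Cl_{E_i}^{\Gal} \le \rk_\ell \Cl_{E_{i+1}} + t(E_i/E_{i+1})$, with (b) the elementary module-theoretic fact that for $G = \Z/\ell$ acting on an $\F_\ell$-vector space $M$, one has $\dim_{\F_\ell} M \le \ell\dim_{\F_\ell} M^G$ (only one simple $\F_\ell[G]$-module, and every indecomposable has $\F_\ell$-dimension at most $\ell$), applied to $M = \Cl_{E_i}[\ell]$.

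Iterating the per-step bound down the tower gives $\rk_\ell \Cl_E \le n\bigl(\rk_\ell \Cl_F + T\bigr)$, where $n = [E:F]$ and $T := \sum_i t(E_i/E_{i+1})$. Since a prime of $F$ ramifies in $\hat E/F$ if and only if it ramifies in $E/F$ (the set of ramified primes is preserved by taking Galois closures), we obtain $T \le n\cdot\omega(\disc(E/F)) \le n[F:\Q]\cdot\omega(D)$, and Hardy--Ramanujan gives $\omega(D) \ll \log D/\log\log D$. Consequently $\rk_\ell \Cl_E \ll_{n,F,\ell} \log D/\log\log D$ and therefore $h_\ell(E) = O_{\epsilon,n,F,\ell}(D^\epsilon)$ for every $\epsilon > 0$. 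The main point of care will be the genus-theoretic rank bound $\rk_\ell \Cl_L^{G} \le \rk_\ell \Cl_K + t(L/K)$: one must track $\Cl_L^G$ at the level of generators rather than merely by its order (the latter would insert $v_\ell(h_K)$, typically much larger than $\rk_\ell \Cl_K$); once this subtlety is handled, the tower iteration and the Hardy--Ramanujan step are routine.
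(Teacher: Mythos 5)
Your overall strategy coincides with the paper's: Lemma \ref{lem:tower} builds the same tower of cyclic degree-$\ell$ steps (using that maximal subgroups of an $\ell$-group are normal of index $\ell$, i.e.\ that the point stabilizer is subnormal); the per-step $\ell$-rank bound is Theorem \ref{main0}; the iteration is Theorem \ref{main1} and Lemma \ref{lem}; and the Hardy--Ramanujan input is Proposition \ref{omega}. Inside the per-step bound you make a different technical choice: the paper controls the number of Jordan blocks via the \emph{coinvariants} $(\Cl_E[\ell])_\sigma$, interprets the corresponding field $M$ as an abelian extension of $F$, and bounds $\Gal(M/E)$ by splitting off the maximal unramified part and using that all inertia subgroups are cyclic; you instead work with the \emph{invariants} $\Cl_L^{G}$ and appeal to the ambiguous class group.

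That is where your argument has a real, though repairable, gap. The ``classical genus-theoretic fact'' you invoke — that $\Cl_L^G$ is generated by the image of $\Cl_K$ together with the classes of the ramified primes — is true only of the \emph{strongly} ambiguous classes $\Cl_L^{G,\mathrm{st}}$ (classes represented by $G$-invariant ideals). The quotient $\Cl_L^G/\Cl_L^{G,\mathrm{st}}$ is identified, via $H^1(G,P_L)\hookrightarrow \hat H^0(G,E_L)$, with $(E_K\cap N_{L/K}L^\ast)/N_{L/K}E_L$, and this can be nontrivial even for $G=C_\ell$. Its $\ell$-rank is bounded by that of $E_K/E_K^\ell$, i.e.\ $O([K:\Q])$, so the correct per-step bound is $\rk_\ell\Cl_L^G\le \rk_\ell\Cl_K + t(L/K) + O([K:\Q])$. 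You flag exactly this subtlety at the end but then assert, incorrectly, that ``tracking generators'' gives the clean bound without the unit term. The omission is harmless for the theorem you want: the extra terms along the tower sum to $O_{n,[F:\Q]}(1)$, so you still reach $\rk_\ell\Cl_E\ll_{n,F,\ell}\log D/\log\log D$ and hence $h_\ell(E)=O_{\epsilon,n,F,\ell}(D^\epsilon)$. The paper's coinvariants-plus-class-field route avoids the unit group entirely, which is why Theorem \ref{main0} achieves the tighter $\rk_\ell\Cl_E\le\ell(e-1+\rk_\ell\Cl_F)$ with no degree-dependent correction.
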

We will prove a more precise version in Theorem \ref{thm:main1}.
%We expect this to give all cases where $\Cl_E[\ell]$ is completely governed by genus theory.
 %\emph{non-random}.\marginpar{non-random????} 
 Results on Conjecture \ref{conj:ell-torsion} for $\ell$, where $E/F$ is not an $\ell$--extension are much more difficult to prove. 
 Heuristically, Conjecture \ref{conj:ell-torsion} is shown to be a consequence of the moments version of the Cohen-Lenstra heuristics in \cite{PTBW2}. We mention some results in this direction: $\ell = 2$ \cite{BSTTTZ}, $\ell = 3$ with $d\le 4$ \cite{EV05}, and for arbitrary $\ell$ with $\Gal(E/F)= (\Z/p\Z)^r$ ($r>1$) \cite{JW20}. There are also recent works \cite{Chen,Ellen16,FreWid18,FreWid18x,ML17,ZTArtin,Widmer1} on proving a non-trivial bound on the $\ell$-torsion of class groups for almost all number fields in some certain family of number fields. 

In Section \ref{sec:multiplicity} we prove the following theorem by applying Theorem \ref{thm:ell-torsion-ell-extension}. 
\begin{theorem}\label{thm:disc-multi-nilpotent}
	Conjecture \ref{conj:disc-multi} holds for nilpotent number field extensions $E/F$.
\end{theorem}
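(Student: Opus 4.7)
The plan is a two-stage reduction. First I would prove Conjecture~\ref{conj:disc-multi} for $p$-groups (for a single prime $p$ at a time) by induction on $|P|$, feeding in Theorem~\ref{thm:ell-torsion-ell-extension} at each step. Then I would extend to nilpotent groups by decomposing into Sylow factors and using the discriminant behaviour in composita.

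For Stage 1 (disc-multi for $p$-groups), let $P\leq S_n$ be a transitive $p$-group; the goal is $a_D=O_\eps(D^\eps)$ for $P$-extensions. In the base case $|P|=p$, class field theory parametrises the degree-$p$ cyclic extensions of $F$ by index-$p$ subgroups of ray class groups whose conductor is forced to divide a fixed power of $D$; the $p$-torsion of such a ray class group is bounded by $h_p(F)$ (a constant depending on $F$) times a divisor-function term of size $O(D^\eps)$. For the inductive step, I would pick a central $Z\leq P$ of order $p$ and set $\bar P=P/Z$. Every $P$-extension $E/F$ sits over a unique $\bar P$-subextension $E'/F$ whose discriminant $\Disc(E'/F)$ is controlled by a fixed power of $D=\Disc(E/F)$, so by the inductive hypothesis there are $O(D^\eps)$ choices for $E'$. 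For each such $E'$, the $P$-extensions of $F$ lying above it correspond to certain degree-$p$ extensions of (the Galois closure of) $E'$, and their count is bounded by the $p$-torsion of a ray class group of that field times a divisor term from the conductor. Since $E'$ is itself a $p$-extension of $F$, Theorem~\ref{thm:ell-torsion-ell-extension} yields $h_p(E')=O(D^\eps)$, and the divisor term is $O(D^\eps)$. Multiplying gives $a_D=O(D^\eps)$.

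For Stage 2 (disc-multi for nilpotent $G$), let $G\leq S_n$ be transitive nilpotent with Sylow decomposition $G=P_1\times\cdots\times P_k$, where $P_i$ denotes the Sylow $p_i$-subgroup. Because in a direct product of coprime-order groups every subgroup splits as the product of its intersections with the factors, the point stabiliser $H\leq G$ decomposes as $H=\prod_i H_i$ with $H_i=H\cap P_i$, and the $G$-action on $\Omega=G/H$ factors as $\prod_i P_i/H_i$. Correspondingly any $G$-extension $E/F$ is the compositum of $k$ pairwise linearly disjoint $P_i$-extensions $E_i/F$, obtained as the fixed fields of $\prod_{j\neq i}P_j$ inside the Galois closure. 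The tower formula gives $\Disc(E_i/F)^{[E:E_i]}\mid\Disc(E/F)$, so each $\Disc(E_i/F)$ takes only $O(D^\eps)$ values by the divisor bound. For each admissible tuple $(D_1,\ldots,D_k)$ of such partial discriminants, Stage 1 bounds the number of $P_i$-extensions of $F$ with $\Disc(E_i/F)=D_i$ by $O(D_i^\eps)\leq O(D^\eps)$; summing over the $O(D^\eps)$ admissible tuples then yields the desired $a_D=O(D^\eps)$.

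The hard part is the inductive step of Stage 1: one has to pass from $P$-extensions of $F$ to degree-$p$ extensions over the $\bar P$-subextension $E'$ (or its Galois closure, to respect the $P$-action), carefully track how the conductor upstairs relates to the discriminant downstairs, and invoke the $\ell$-torsion bound not on $F$ but on the intermediate $p$-extension $E'$ --- which is precisely where Theorem~\ref{thm:ell-torsion-ell-extension} becomes essential. A secondary technical point is the control of local conductor interactions at primes of $F$ which happen to ramify in several of the coprime-degree factors $E_i$ in Stage 2; since we only need an upper bound, this can be absorbed into another $O(D^\eps)$ divisor factor.
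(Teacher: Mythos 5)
Your proposal is correct and follows essentially the same strategy as the paper: induction on the $p$-group size, peeling off a cyclic degree-$p$ top step, bounding its count by ray class field theory over the intermediate field $E'$ together with the $p$-torsion bound of Theorem~\ref{thm:ell-torsion-ell-extension} applied to $E'$, and then reducing nilpotent to $p$-groups via the permutation-group direct-product decomposition (the paper's Lemma~\ref{lem:p-grp-2-nilpotent}). The only cosmetic difference is your construction of $E'$ as the fixed field of $HZ$ for a central order-$p$ subgroup $Z$, versus the paper's maximal-subgroup chain from Lemma~\ref{lem:tower}; both yield the same structural datum, namely a cyclic degree-$p$ Galois extension $E/E'$ with $E'/F$ a transitive $p$-extension of smaller order, so the argument is unchanged.
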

Finally, by applying Theorems \ref{thm:disc-multi-nilpotent} and \ref{thm:1}, we recover the following theorem.
\begin{theorem}  \label{thm:weakmalle}
	Conjecture \ref{conj:Mal1} holds for nilpotent number field extensions $E/F$. 
\end{theorem}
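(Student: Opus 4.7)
The plan is to obtain this as an essentially formal consequence of the two immediately preceding results. I would fix a nilpotent transitive permutation group $G\leq S_n$; nilpotency of the extension $E/F$ means nilpotency of the Galois group of the Galois closure, which in turn forces $G$ to be nilpotent as an abstract group, since subgroups and quotients of nilpotent groups remain nilpotent. The first step is to apply Theorem \ref{thm:disc-multi-nilpotent} to conclude that Conjecture \ref{conj:disc-multi} holds for $G$, i.e., $a_D = O_{\epsilon,F,n}(D^{\epsilon})$ uniformly in $D$.

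Next, I would feed this pointwise bound into Theorem \ref{thm:1}(ii), which asserts precisely that the discriminant multiplicity bound for a single group $G\leq S_n$ implies Malle's upper bound for that same $G$. The output is the desired estimate $N(F,G;X) = O_{\epsilon,F}(X^{1/a(G)+\epsilon})$. Since this argument applies uniformly to every nilpotent transitive $G\leq S_n$, the theorem follows by ranging over all such $G$.

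Because the proof is a two-line composition of already-established results, there is no genuine obstacle at this stage. The real content lies upstream: in Theorem \ref{thm:ell-torsion-ell-extension} (the $\ell$-torsion bound for $\ell$-extensions via genus theory), which is used to prove Theorem \ref{thm:disc-multi-nilpotent}, and in Theorem \ref{thm:1}(ii), whose internal argument must upgrade a pointwise multiplicity bound into a cumulative counting bound. The latter step is the conceptually nontrivial one, since a naive summation $\sum_{D\leq X} a_D$ with $a_D = O(D^\epsilon)$ only yields $X^{1+\epsilon}$; to recover $X^{1/a(G)+\epsilon}$ one must exploit the sparsity of the set of integers $D\leq X$ that actually occur as discriminants of $G$-extensions, typically via the control on ramification exponents forced by the permutation structure of $G$.
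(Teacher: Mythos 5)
Your proposal is correct and is exactly the paper's argument: the paper deduces the theorem (as Corollary~\ref{cor:weakmalle}) directly from Theorem~\ref{thm:disc-multi-nilpotent} together with Theorem~\ref{thm:C-B}, the latter being precisely the content of Theorem~\ref{thm:1}(ii). You also correctly identify where the nontrivial work lives, namely that the proof of Theorem~\ref{thm:C-B} must exploit the sparsity of admissible discriminants $D$ (those with $p\mid D\Rightarrow p^{a(G)}\mid D$) to convert the pointwise bound $a_D=O_\epsilon(D^\epsilon)$ into the exponent $1/a(G)+\epsilon$.
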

We mention that Theorem \ref{thm:weakmalle} is also proved in \cite{KlMa04} for Galois nilpotent extensions and in \cite{Alb} for general nilpotent extensions. We recover this theorem as a direct consequence of Theorem \ref{thm:disc-multi-nilpotent}, and therefore give a short and simplified proof.

We finally remark that, as shown in the proof of Theorem \ref{main0}, the fundamental reason for these conjectures to hold in such a perfect shape in these cases is completely group theoretic, i.e., nilpotent groups have non-trivial center.

All results in this paper are effective.

\section{$\ell$-torsion conjecture}\label{sec:torsion}
In this section we prove Theorem \ref{thm:ell-torsion-ell-extension}. We give a more detailed version in   Theorem \ref{thm:main1}. 
We start with the following theorem, which is proved in \cite[Theorem 2.2]{Cor} for odd $\ell$ and generalized in \cite[Theorem 2]{Ros} to $\ell=2$.  In order to keep this
note self-contained we give a proof of this statement here. In the following we use the notion places for finite prime ideals and infinite places.
\begin{theorem}\label{main0}
  Let $E/F$ be a cyclic extension of number fields of degree $\ell$ ramified in $t$ places. Let $e:=\max(t,1)$. Then
%  $$\rk_\ell(\Cl_E)\leq 
% \begin{cases}
%   \ell(e-1 + \rk_\ell(\Cl_E)) & \mbox{ if }\rk_\ell(\Cl_F)>0\\
%   (\ell-1)(e-1) &\mbox{ if }\rk_\ell(\Cl_F)=0.
%   \end{cases}$$   
     $$\rk_\ell(\Cl_E)\leq \ell(e-1 + \rk_\ell(\Cl_F)).$$  
\end{theorem}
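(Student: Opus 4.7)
The plan is to combine two ingredients: an $\F_\ell[G]$-module structure argument that reduces the problem to the ambiguous class group $\Cl_E^G$, and a genus-theoretic bound on that group.

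For the first ingredient, write $G = \Gal(E/F) = \langle\sigma\rangle$ of order $\ell$. Since $\sigma^\ell - 1 = (\sigma - 1)^\ell$ in characteristic $\ell$, the group algebra $\F_\ell[G] \cong \F_\ell[\sigma]/(\sigma-1)^\ell$ is a local Artinian principal ideal ring. Every finitely generated module over it decomposes as a direct sum of cyclic modules $\F_\ell[G]/(\sigma-1)^{j}$ with $1 \le j \le \ell$; each such summand has $\F_\ell$-dimension $j \le \ell$ and $G$-invariants of $\F_\ell$-dimension exactly $1$. Hence any finitely generated $\F_\ell[G]$-module $M$ satisfies $\dim_{\F_\ell} M \le \ell \cdot \dim_{\F_\ell} M^G$. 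Applying this to $M = \Cl_E[\ell]$ and using the identification $\Cl_E[\ell]^G = (\Cl_E^G)[\ell]$, one gets
$$\rk_\ell(\Cl_E) \le \ell \cdot \rk_\ell(\Cl_E^G).$$

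For the second ingredient, I would bound the $\ell$-rank of $\Cl_E^G$ by genus theory. Let $\iota \colon \Cl_F \to \Cl_E$ be the extension-of-ideals map; its image lies in $\Cl_E^G$, and $\rk_\ell(\iota(\Cl_F)) \le \rk_\ell(\Cl_F)$. The genus group $\Cl_E^G / \iota(\Cl_F)$ is an elementary abelian $\ell$-group: when $t \ge 1$, since $E/F$ has prime degree $\ell$, each ramified place $\mathfrak{p}_i$ has a unique prime $\mathfrak{P}_i$ above with $\mathfrak{P}_i^\ell = \iota(\mathfrak{p}_i) \in \iota(\Cl_F)$, and the classes $[\mathfrak{P}_1], \ldots, [\mathfrak{P}_t]$ generate the quotient. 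A single canonical relation among these generators---reflecting the factor $[E{:}F]=\ell$ in the denominator of Chevalley's ambiguous class number formula, or equivalently coming from a suitable principal ideal of $E$ whose divisor is supported on the $\mathfrak{P}_i$---cuts the $\ell$-rank of the genus group to at most $t-1$; when $t = 0$ the quotient is trivial. In either case the rank is at most $e - 1$. Combining, $\rk_\ell(\Cl_E^G) \le (e - 1) + \rk_\ell(\Cl_F)$, which together with the first step proves the theorem.

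The main obstacle is obtaining the sharp $t - 1$ bound on the genus rank rather than the naive $t$. The extra $-1$ comes from one canonical relation---classically $(\sqrt{\Disc(E/F)}) = \prod_i \mathfrak{P}_i$ in the case $F = \Q$, $\ell = 2$---and in general requires either Chevalley's formula or a direct cohomological argument using Hilbert~90 to identify the obstruction coming from $H^1(G, \OO_E^*)$. Additional care is needed for the infinite places when $\ell = 2$, and for the unit index $[\OO_F^* : \OO_F^* \cap N_{E/F}(E^*)]$, but these only contribute bounded factors absorbed into the stated bound.
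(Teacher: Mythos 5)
Your first step is sound and essentially parallel to the paper's: over $\F_\ell[\sigma]/(\sigma-1)^\ell$ the module $\Cl_E[\ell]$ decomposes into Jordan blocks of size at most $\ell$, and since each block contributes dimension one to the invariants, $\rk_\ell(\Cl_E)\le \ell\cdot\dim(\Cl_E[\ell])^G=\ell\cdot\rk_\ell(\Cl_E^G)$. (The paper phrases this via the coinvariants, i.e.\ the maximal quotient with trivial $\sigma$-action, but for an $\F_\ell[G]$-module these have the same dimension.) From there, however, the two proofs genuinely diverge: the paper realizes the coinvariant quotient as an abelian extension $M/F$ and bounds its $\ell$-rank directly by ramification theory, whereas you try to bound $\rk_\ell(\Cl_E^G)$ by genus theory. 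The latter is the trickier side of the duality, and this is where your argument has real gaps.

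Concretely, the assertion that $\Cl_E^G/\iota(\Cl_F)$ is generated by the classes of the ramified primes $\idP_1,\dots,\idP_t$ is false in general. The ramified primes (together with $\iota(\Cl_F)$) generate only the \emph{strongly} ambiguous part, i.e.\ the image of $I(E)^G$ in $\Cl_E$. The exact sequence $I(E)^G\to\Cl_E^G\to H^1(G,\mathrm{Prin}(E))\to H^1(G,I(E))=0$ together with $H^1(G,\mathrm{Prin}(E))\hookrightarrow H^2(G,\OO_E^*)$ shows that $\Cl_E^G$ can contain classes with no $G$-invariant ideal representative; these are not visible from the $\idP_i$. In particular your handling of the case $t=0$ (``the quotient is trivial'') is wrong: take $E/F$ unramified cyclic with nontrivial capitulation, and $\Cl_E^G/\iota(\Cl_F)$ can be nontrivial even though there are no ramified primes at all.

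More importantly, even granting Chevalley's formula, the desired rank inequality $\rk_\ell\big(\Cl_E^G/\iota(\Cl_F)\big)\le e-1$ is not a formal consequence: the genus group has order $|\ker\iota|\cdot\ell^{t-1}/[\OO_F^*:\OO_F^*\cap N_{E/F}E^*]$, so one would need $|\ker\iota|\le[\OO_F^*:\OO_F^*\cap N_{E/F}E^*]$ when $t\ge1$, and this requires a careful comparison of the capitulation kernel with the unit index (e.g.\ via $\ker\iota\hookrightarrow H^1(G,\OO_E^*)$ and the Herbrand quotient), which you do not carry out. Your remark that the unit index and the infinite places ``only contribute bounded factors absorbed into the stated bound'' is not appropriate here: Theorem \ref{main0} is an exact inequality with explicit constants, not an asymptotic estimate, so there is no room to absorb anything. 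In fact the individual claim ``genus rank $\le e-1$'' can fail; when it does, the slack must be recovered from $\rk_\ell(\iota(\Cl_F))<\rk_\ell(\Cl_F)$, and your split into two separate bounds loses that compensation. This is precisely the bookkeeping that the paper's coinvariant/class-field route avoids, since in that picture all the ramification data sits cleanly in $\Gal(M/L)$ and no capitulation or unit computation is needed.
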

\begin{proof}
	Firstly, the Galois group $\Gal(E/F) = \langle \sigma \rangle$
        acts on the $\F_{\ell}$-vector space $\Cl_E[\ell]$. Since
        $\sigma^{\ell} = \text{id}$, the minimal polynomial for
        $\sigma$ on $\Cl_E[\ell]$ divides $x^\ell-1 = (x-1)^{\ell}$.
        Therefore, the only eigenvalue is $1$ and each
        Jordan block has at most size $\ell$. It suffices to prove that
        the number of Jordan blocks is bounded by
        $e-1+\rk_{\ell}(\Cl_F)$.
	
	The number of Jordan blocks is equal to the dimension of the
        maximal quotient space on which $\sigma$ acts trivially.
%        By class field theory, this is equivalent to the rank of
%        unramified $C_{\ell}$-extension $L/E$ with $\Gal(L/F)\simeq
%        C_{\ell}\times C_{\ell}$ or $C_{\ell^2}$.
Denote the corresponding class field by $M$. Notice that since $\sigma$ acts trivially, the field $M/F$ is Galois and abelian, therefore $\Gal(M/E)\cong C_\ell^s$ for some $s\geq 0$ and $\Gal(M/F)\cong C_\ell^{s+1}$ or
$\Gal(M/F)\cong C_{\ell^2}\times C_\ell^{s-1}$. We would like to prove that
$s\leq e-1+\rk_{\ell}(\Cl_F)$. We note that the second case can only happen
when $t=0$, i.e. $E/F$ is unramified. In this case we see $s=\rk_\ell(\Cl_F)$
and our claim is proved.

In the first case denote by $L/F$ the maximal unramified (including infinite places) subextension
of $M/F$. We know by construction that $\Gal(L/F)\cong
C_\ell^{\rk_\ell(\Cl_F)}$.  $M/L$ is abelian and it has no subextension which is everywhere unramified (including infinite places).
Therefore $\Gal(M/L)$ is generated by the inertia groups of the ramified
prime ideals including the infinite ones. Let $\idp$ be a prime ideal of $\OO_F$ which is ramified
in $M$. Since $M/E$ is unramified, we see that the inertia group has
size $\ell$ and is therefore cyclic. The same applies for the prime
ideal in $L$ lying above $\idp$. The inertia groups at infinite places are always cyclic and we see that each ramified prime in
$E/F$ can increase the rank of $\Gal(M/L)$ by at most 1. We therefore
get:
\[\rk_\ell(\Gal(M/F)) \leq \rk_\ell(\Cl_F) + e \mbox{ and }\rk_\ell(\Gal(M/E))=\rk_\ell(\Gal(M/F))-1.\qedhere\]
%which finishes our proof.
%	We separate the discussion depending on whether $E/F$ is
%        ramified or not. If $E/F$ is ramified, then $L/F$ must have
%        Galois group $C_{\ell}\times C_{\ell}$. By genus theory, the
%        rank of $C_{\ell}$-extension $K/F$ with $EK/E$ unramified is
%        at most $e-1+\rk_{\ell}(\Cl_F)$. On the other hand, if $E/F$
%        is unramified, then the dimension of $L/F$ with
%        $\Gal(L/F)\simeq C_{\ell}\times C_{\ell}$ is equal to
%        $\rk_{\ell}(\Cl_F)-1$. There is at most one more dimension
%        coming from $L/F$ with $\Gal(L/F) = C_{\ell^2}$ since
%        $C_{\ell^2}\times_{C_{\ell}} C_{\ell^2} = C_{\ell^2}\times
%        C_{\ell}$.
\end{proof}

\begin{remark} \label{rem:up}
	In case  $\rk_\ell (\Cl_F)=0$ \cite[Theorem 2.2]{Ros} and \cite[Theorem 2]{Cor} prove a slightly better upper bound for $\rk_\ell(\Cl_E)$, i.e. 
	$\rk_\ell(\Cl_E) \leq (\ell-1) \cdot (e-1).$ 
	It is known that for $F=\Q$ and $\ell=2$ this bound is sharp by genus theory.
	Furthermore, \cite[Theorem 2.7]{Cor} also gives a better bound for the cyclic of order $\ell^r$-case compared to the inductive approach we present in Theorem \ref{main1}.
\end{remark}
We do not prove this remark since we are only interested in the asymptotic behavior and therefore the change of constants does not matter. 	
In order to prove Theorem \ref{main1} for non normal extensions we need the following lemma.	
\begin{lemma} \label{lem:tower}
	 Let $n=\ell^r$ and $G\leq S_n$ be an $\ell$-group and $E/F$ be an extension of number fields with $\Gal(E/F)\cong G$. Then there exists a tower of fields
	 \begin{equation}\label{eq:1}
	 F=F_0 \leq F_1\leq \ldots \leq F_{r-1} \leq F_r=E
	 \end{equation}
	 such that $\Gal(F_{i+1}/F_i)=C_\ell$ for all $0\leq i \leq r-1$.
\end{lemma}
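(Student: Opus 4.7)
The plan is to reduce Lemma~\ref{lem:tower} to a purely group-theoretic statement about $\ell$-groups. Let $\hat{E}/F$ be the Galois closure so that $\Gal(\hat{E}/F) = G$ (as an abstract group), and let $H\le G$ denote the stabilizer of a point in $\{1,\ldots,n\}$, so that $E = \hat{E}^H$ and $[G:H] = n = \ell^r$. By the Galois correspondence, producing a tower of fields
\[F = F_0 \le F_1 \le \ldots \le F_r = E\]
with $\Gal(F_{i+1}/F_i)\cong C_\ell$ is equivalent to producing a chain of subgroups
\[G = H_0 \rhd H_1 \rhd \ldots \rhd H_r = H\]
in which each $H_i$ is normal in $H_{i-1}$ with $[H_{i-1}:H_i] = \ell$; one then sets $F_i := \hat{E}^{H_i}$.

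The group-theoretic claim to establish is therefore: if $G$ is a finite $\ell$-group and $H \le G$ is any subgroup of index $\ell^r$, then there is a subnormal chain from $H$ to $G$ with each successive quotient of order $\ell$. I would prove this by downward induction on the index, using the well-known normalizer property of nilpotent (in particular, $\ell$-)groups: whenever $H \lneq G$, one has $H \lneq N_G(H)$. Given $H_i \lneq G$ that has already been constructed, the quotient $N_G(H_i)/H_i$ is a nontrivial $\ell$-group, so it contains a subgroup of order $\ell$ (for instance by Cauchy's theorem, or because every nontrivial $\ell$-group has nontrivial center). Pulling this back gives a subgroup $H_{i-1}$ with $H_i \lhd H_{i-1} \le N_G(H_i)$ and $[H_{i-1}:H_i] = \ell$; normality of $H_i$ in $H_{i-1}$ is automatic since $H_{i-1} \subseteq N_G(H_i)$. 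Iterating decreases $[G:H_i]$ by a factor of $\ell$ at each step, so after exactly $r$ steps one reaches $H_0 = G$.

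The only subtlety is to keep the indexing consistent: the chain in $G$ is built upward from $H$, while the chain of fields is read downward from $E$ to $F$, so one must reverse the order when translating back. Beyond that, the argument is a direct combination of the Galois correspondence with the standard fact that every subgroup of an $\ell$-group is subnormal with cyclic factors of order $\ell$, and I do not foresee any genuine obstacle.
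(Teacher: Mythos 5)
Your proposal is correct and takes essentially the same approach as the paper: reduce via the Galois correspondence to finding a subnormal chain from $H$ (the point stabilizer) to $G$ with successive quotients of order $\ell$, then invoke nilpotence of $\ell$-groups. The paper builds this chain top-down by repeatedly choosing a maximal subgroup of $G_i$ containing $H$ (which is automatically normal of index $\ell$ in an $\ell$-group), whereas you build it bottom-up from $H$ using the normalizer growth property $H \lneq N_G(H)$; the two routes are equivalent and equally standard.
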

\begin{proof}
Let $\tilde{E}$ be the normal closure of $E$ over $F$. Denote $H$ to be the subgroup of $G$ fixing $E$ and choose a maximal subgroup $G_1\leq G$ that contains $H$. Note that all maximal subgroups of an $\ell$-group have index $\ell$ and are normal. Define $F_1$ to be the subfield of $\tilde{E}$ fixed by $G_1$. Inductively, we can find a sequence of subgroups $G\cong G_0\supset G_1\supset \cdots\supset G_r = H$ with $[G_i:G_{i+1}]= \ell$ for every $0\le i\le r-1$ and define $F_i$ to be the subfield fixed by $G_i$. 
%$\Gal(E/F)\leq S_n$ is an $\ell$-group. Even
%if the extension $E/F$ is not normal, elements in the center induce an
%automorphism of $E$ (fixing $F$). Since $\ell$-groups always have
%non-trivial center, we can assume that $E$ admits an automorphism of
%order $\ell$. The fixed field under this automorphism has index $\ell$
%in $E$ and the relative Galois group is cyclic of order $\ell$.  
%Inductively, we find the tower as wanted.
\end{proof}
	
 Now we prove our main result of this section. We remark that \cite[page 424]{Cor} describes just before Theorem 3 how to get this result for normal $\ell$-extensions.

\begin{theorem}\label{main1}
	Let $n=\ell^r$, $G\leq S_n$ be a transitive $\ell$-group, and $E/F$ be an ex\-ten\-sion of number fields with $\Gal(E/F)\cong G$, 
  and with tower as defined in \eqref{eq:1}.
  Let $t_i$ be the number of ramified places in $F_{i+1}/F_i$ and 
  $e_i:=\max(t_i,1)$. Then we get:
%  \begin{equation}\label{eq:2}
%    \rk_\ell(\Cl_E)\leq 
%    \begin{cases}
%        \sum_{i=0}^{r-1} \ell^{r-i}(e_i-1) + n\rk_\ell(\Cl_F) & \mbox{ if }\rk_\ell(\Cl_F)>0,\\
%        \sum_{i=0}^{r-1} \ell^{r-i}(e_i-1) - \ell^{r-1}(e_0-1) & \mbox{ if }\rk_\ell(\Cl_F)=0.
%        \end{cases}
%  \end{equation}
    \begin{equation}\label{eq:2}
    \rk_\ell(\Cl_E)\leq \sum_{i=0}^{r-1} \ell^{r-i}(e_i-1) + n\rk_\ell(\Cl_F).
    \end{equation}
\end{theorem}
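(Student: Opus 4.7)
The plan is to apply Theorem \ref{main0} inductively along the tower of cyclic degree-$\ell$ extensions
\[F = F_0 \leq F_1 \leq \cdots \leq F_{r-1} \leq F_r = E\]
provided by Lemma \ref{lem:tower}. Since each step $F_i \leq F_{i+1}$ is a cyclic extension of degree $\ell$ ramified in $t_i$ places, Theorem \ref{main0} applied to $F_{i+1}/F_i$ yields
\[\rk_\ell(\Cl_{F_{i+1}}) \leq \ell\bigl(e_i - 1 + \rk_\ell(\Cl_{F_i})\bigr).\]

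I would then do a straightforward induction on $i$. Setting $r_i := \rk_\ell(\Cl_{F_i})$, the recursion $r_{i+1} \leq \ell(e_i - 1) + \ell r_i$ unrolls to
\[r_r \leq \sum_{i=0}^{r-1} \ell^{r-i}(e_i - 1) + \ell^r r_0,\]
which, since $\ell^r = n$ and $F_r = E$, $F_0 = F$, is exactly the bound \eqref{eq:2}. This is a routine unwinding; the only thing to verify cleanly is that the coefficient of $(e_i - 1)$ comes out to $\ell^{r-i}$, which it does because each later application of the recursion multiplies the prior contribution by a further factor of $\ell$.

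There is essentially no hard step once Theorem \ref{main0} and Lemma \ref{lem:tower} are in hand: the tower reduces the general transitive $\ell$-group case to $r$ applications of the cyclic degree-$\ell$ case, and the accumulated error terms collapse into the stated sum. The only minor point worth flagging is that the bound does not depend on the particular choice of chain of maximal subgroups in Lemma \ref{lem:tower}; different choices give different values of the $e_i$, so the theorem really produces a bound for each such tower rather than a canonical one, but this is of no consequence for the asymptotic applications in the sequel.
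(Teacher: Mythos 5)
Your proof is correct and follows exactly the same route as the paper: apply Lemma \ref{lem:tower} to get the tower of cyclic degree-$\ell$ steps, then apply Theorem \ref{main0} at each step and unwind the recursion. The paper leaves the unwinding implicit ("The assertion now follows by applying Theorem \ref{main0} for each step"), while you spell it out; the arithmetic of the coefficients is right.
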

\begin{proof}
By Lemma \ref{lem:tower} we find a tower of cyclic extensions of order $\ell$.
The assertion now follows by applying Theorem \ref{main0} for each step.	
\end{proof}	
The above version is still a little bit complicated since we need to
know the number of ramified places in each step. It would be
much nicer to have a bound which is only depending on the number of
ramified places of $F$. Let $\idp$ be a prime
ideal of $\OO_F$  which ramifies for the first time
in $F_{i+1}$. We have the extreme case if $\idp$ splits completely in $F_i$
which means that there are $\ell^i$ places over $\idp$ lying in $F_i$.

Therefore, if $t$ is the number of prime ideals in $\OO_F$ which are
ramified in $E/F$, then we get that $t_i\leq e_i \leq \max(\ell^i t,1)$ in
\eqref{eq:2}. Therefore we get:
\begin{lemma}\label{lem}
  Let $G\leq S_n$ be a transitive $\ell$-group and $E/F$ be an extension with $\Gal(E/F)\cong G$ of degree $n=\ell^r$ which is
   ramified in $t$ places. Then%If $t\geq 1$ we get:
   \begin{equation}\label{eq:3}
    \rk_\ell(\Cl_E)\leq  r n t + n \rk_\ell(\Cl_F).
   \end{equation}
   %t=0 is the same as the general case.
   %For $t=0$ we get:
   %$$\rk_\ell(\Cl_E)\leq n\rk_\ell(\Cl_F).$$
\end{lemma}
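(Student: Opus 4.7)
The plan is to deduce this as an immediate corollary of Theorem \ref{main1} by bounding each local quantity $e_i$ in terms of the single global quantity $t$. Concretely, I would first invoke the tower \eqref{eq:1} from Lemma \ref{lem:tower} and the estimate
\[
\rk_\ell(\Cl_E)\leq \sum_{i=0}^{r-1} \ell^{r-i}(e_i-1) + n\rk_\ell(\Cl_F)
\]
from \eqref{eq:2}, so that the entire task reduces to controlling $\sum_{i=0}^{r-1}\ell^{r-i}(e_i-1)$.

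The key geometric observation, which is essentially spelled out in the paragraph preceding the statement, is that a place $\idp$ of $F$ can contribute to $t_i$ only through the places of $F_i$ lying above it, of which there are at most $[F_i:F]=\ell^i$. Consequently, each of the $t$ ramified places of $F$ gives rise to at most $\ell^i$ ramified places of $F_i$ in the step $F_{i+1}/F_i$, so that $t_i\leq \ell^i t$. When $t\geq 1$ this immediately yields $e_i-1 = t_i - 1 \leq \ell^i t$ (using $e_i=\max(t_i,1)$), and when $t=0$ every extension in the tower is unramified, so $t_i=0$ and $e_i-1=0\leq \ell^i t$ trivially. In either case $e_i-1 \leq \ell^i t$.

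Plugging this into \eqref{eq:2} gives
\[
\sum_{i=0}^{r-1} \ell^{r-i}(e_i-1) \;\leq\; \sum_{i=0}^{r-1} \ell^{r-i}\cdot \ell^i t \;=\; \sum_{i=0}^{r-1} \ell^r t \;=\; r\ell^r t \;=\; rnt,
\]
which combined with the $n\rk_\ell(\Cl_F)$ term from \eqref{eq:2} yields \eqref{eq:3}. There is no real obstacle here; the only step requiring any thought is verifying that the crude bound $t_i\leq \ell^i t$ is uniform in the choice of tower \eqref{eq:1} (it is, since it depends only on $[F_i:F]$ and on the ramification of places of $F$), and checking that the $t=0$ boundary case is handled correctly by the $\max(t_i,1)$ convention in the definition of $e_i$.
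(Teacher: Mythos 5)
Your proposal is correct and follows essentially the same route as the paper: both invoke \eqref{eq:2}, bound $e_i - 1 \leq \ell^i t$ by noting each ramified place of $F$ spawns at most $[F_i:F] = \ell^i$ places of $F_i$, and then sum $\sum_{i=0}^{r-1}\ell^{r-i}\cdot\ell^i t = rnt$. Your treatment is if anything slightly more careful than the paper's, since you explicitly check the $t=0$ boundary case that the $e_i = \max(t_i,1)$ convention is designed to absorb.
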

\begin{proof}
  Note that $e_i-1 \leq \ell^i t$ and using this in \eqref{eq:2} we get
  $\rk_\ell(\Cl_E)\leq  r n t + n\rk_\ell(\Cl_F)$
  and the assertion follows easily.
\end{proof}

In the next step we would like to know an upper bound for the number
of different prime ideals dividing the discriminant of $E/F$. We use
the following standard result, e.g. see \cite[Section 5.3, p. 83]{Ten}.
\begin{proposition}\label{omega}
  For an integer $n$ we denote by $\omega(n)$ the number of distinct
  prime factors. Then there exists an explicit constant $C>0$ such that for every $n> 2$, 
  $$\omega(n)\leq C\frac{\log n}{\log\log n}.$$
  Let $F$ be a number field of degree $d$.
  Then for an integral ideal $\mathfrak{n}\normal O_F$ with absolute norm $n = |\mathfrak{n}|>2$, the number $\omega(\mathfrak{n})$ of distinct prime ideal factors is bounded by
  $$\omega(\mathfrak{n}) \leq d \cdot \omega(n) \le Cd\frac{\log n}{\log\log n}.$$
\end{proposition}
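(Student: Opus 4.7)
The plan is to prove the two bounds separately, with the integer case being the classical statement and the ideal case reducing to it via elementary algebraic number theory.

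For the integer bound, I would argue as follows. If $\omega(n) = k$, let $p_1 < p_2 < \cdots < p_k$ be the distinct prime divisors of $n$. Then
\[ n \geq p_1 p_2 \cdots p_k \geq 2 \cdot 3 \cdot 5 \cdots p_k = \prod_{p \leq p_k} p, \]
so $\log n \geq \theta(p_k)$, where $\theta$ is the Chebyshev function. By Chebyshev's elementary lower bound $\theta(x) \geq cx$ for some constant $c > 0$ and all $x \geq 2$, we obtain $p_k \leq c^{-1} \log n$. On the other hand, $k = \pi(p_k)$, and again by Chebyshev $\pi(x) \leq C' x/\log x$ for $x \geq 2$. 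Combining these gives $k \leq C \log n / \log \log n$ for a suitable explicit constant $C$, provided $n > 2$ so that $\log \log n$ makes sense. No deep prime-number-theorem input is required; Chebyshev suffices and keeps the constant explicit as claimed.

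For the ideal bound, the key observation is that each prime ideal $\mathfrak{p}$ of $\OO_F$ dividing $\mathfrak{n}$ lies over a unique rational prime $p$, and this rational prime must divide the absolute norm $N(\mathfrak{n}) = n$. Moreover, the number of prime ideals of $\OO_F$ lying above a fixed rational prime $p$ is at most the degree $d = [F:\Q]$, since in the factorization $p\OO_F = \prod \mathfrak{p}_i^{e_i}$ one has $\sum e_i f_i = d$ and in particular the number of distinct $\mathfrak{p}_i$'s is at most $d$. Partitioning the prime ideal divisors of $\mathfrak{n}$ according to the rational prime they sit over yields
\[ \omega(\mathfrak{n}) \leq d \cdot \omega(n), \]
and combining with the first part gives $\omega(\mathfrak{n}) \leq Cd \log n / \log \log n$.

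There is no real obstacle here: both steps are standard. The only care needed is to ensure that the constants are absolute (independent of $F$ and $\mathfrak{n}$) so that the result is effective in the form stated, which is handled by using Chebyshev's inequalities in place of asymptotic PNT statements.
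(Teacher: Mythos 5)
The paper does not actually prove this proposition; it treats it as a standard fact with a pointer to Tenenbaum (and to Robin for the explicit constant $C=1.3841$). Your overall route — Chebyshev bounds on $\theta$ and $\pi$ together with the primorial lower bound for the integer case, and reduction to rational primes for the ideal case — is exactly the standard one, and your second half, showing $\omega(\mathfrak{n}) \le d\,\omega(n)$ by observing that each prime ideal $\mathfrak{p}\mid\mathfrak{n}$ lies over a rational prime $p\mid n$ and that at most $d$ primes of $O_F$ lie over any given $p$, is correct as written.

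There is, however, a genuine slip in the first half. Writing $p_1<\cdots<p_k$ for the distinct prime divisors of $n$, the asserted inequality $p_1\cdots p_k \ge \prod_{p\le p_k} p$ is false in general: for $n=14$ one has $p_1p_2 = 14$ while $\prod_{p\le 7}p = 210$. Consequently the conclusion $p_k \le c^{-1}\log n$ is also false (take $n$ a large prime, so $p_1 = n$), and $k=\pi(p_k)$ likewise fails unless $p_1,\ldots,p_k$ happen to be the first $k$ primes. What you evidently intend, and what is true, is the comparison with the $k$-th prime $q_k$: since $p_i \ge q_i$ for each $i$, one gets $\log n \ge \sum_i \log q_i = \theta(q_k)$, hence $q_k \le c^{-1}\log n$ by Chebyshev's lower bound on $\theta$, and then $k = \pi(q_k) \le \pi(c^{-1}\log n) \le C'(c^{-1}\log n)/\log(c^{-1}\log n)$ using first the monotonicity of $\pi$ and then Chebyshev's upper bound. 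With $q_k$ substituted for $p_k$ throughout, the argument is sound and gives the stated bound with an explicit constant; as literally written, the inequality chain is broken.
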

We remark that the average order of $\omega(n)$ is $\log\log n$. Note that we can choose $C=1.3841$, see
\cite[Thm. 11]{Rob}.

Using that the number of ramified prime ideals in a relative extension $E/F$ is $\omega(\disc(E/F))$, we prove our main result by applying Proposition \ref{omega} and Lemma \ref{lem}. 
\begin{theorem}\label{thm:main1}
  Let $E/F$ be an $\ell$-group extension of degree $n=\ell^r$ and absolute discriminant
  $D:=\Disc(E/F)$, and define $d:=[F:\Q]$. Then we get:
  $$\rk_\ell(\Cl_E) \leq n \rk_\ell(\Cl_F) + nr\cdot C d \frac{\log D}{\log\log D} \text{ for }D>2,$$
 equivalently, we get for the size $h_\ell(E)$ of the $\ell$-torsion part $\Cl_E[\ell]$:
    $$h_{\ell}(E) \leq  h_{\ell}(F)^n \cdot D^{\frac{Cndr\log \ell}{\log\log D}} = O_{\epsilon,F, n}(D^\eps)
  \mbox{ for all }\eps>0.
  $$
  For $D=1$ we get $\rk_\ell(\Cl_E) \leq n \rk_\ell(\Cl_F)$ and therefore $h_{\ell}(E) \leq  h_{\ell}(F)^n$.
 \end{theorem}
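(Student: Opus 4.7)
The plan is to assemble the results already developed in this section: Lemma \ref{lem} bounds $\rk_\ell(\Cl_E)$ in terms of the number of ramified places of $E/F$ and $\rk_\ell(\Cl_F)$, while Proposition \ref{omega} bounds the number of prime divisors of any ideal of $\OO_F$ by a factor of $Cd\log D/\log\log D$. The content of Theorem \ref{thm:main1} is essentially to combine these two inputs and then exponentiate to pass from ranks to sizes.

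First I would apply Lemma \ref{lem}: letting $t$ denote the number of ramified places of $E/F$, we get
\[
\rk_\ell(\Cl_E) \leq rnt + n\,\rk_\ell(\Cl_F).
\]
Every ramified finite prime of $E/F$ divides $\disc(E/F)$, so the number of such primes is $\omega(\disc(E/F))$. Since $\disc(E/F) \normal \OO_F$ has absolute norm $D>2$, Proposition \ref{omega} gives $\omega(\disc(E/F)) \leq C d \log D/\log\log D$. Infinite ramified places contribute only when $\ell=2$ and add at most $d$ to $t$, which is dominated by the main term (at worst by enlarging $C$). Plugging this bound on $t$ back into the rank inequality yields the first assertion
\[
\rk_\ell(\Cl_E) \leq n\,\rk_\ell(\Cl_F) + nrCd\,\frac{\log D}{\log\log D}.
\]

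Next I would exponentiate. Since $\Cl_E[\ell]$ is an $\F_\ell$-vector space, $h_\ell(E) = \ell^{\rk_\ell(\Cl_E)}$, and similarly $h_\ell(F) = \ell^{\rk_\ell(\Cl_F)}$. Using $\ell^a = D^{a\log\ell/\log D}$, the rank bound becomes
\[
h_\ell(E) \leq h_\ell(F)^n \cdot \ell^{nrCd\log D/\log\log D} = h_\ell(F)^n \cdot D^{\,Cndr\log\ell/\log\log D}.
\]
The exponent of $D$ tends to $0$ as $D\to\infty$, so for any fixed $\eps>0$ it is eventually smaller than $\eps$, giving $h_\ell(E) = O_{\eps,F,n}(D^\eps)$. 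For the degenerate case $D=1$, the extension $E/F$ is everywhere unramified, so $t=0$ and all $e_i=1$ in the tower of Lemma \ref{lem:tower}; Lemma \ref{lem} (or its proof with $t=0$) then yields $\rk_\ell(\Cl_E) \leq n\,\rk_\ell(\Cl_F)$ and hence $h_\ell(E) \leq h_\ell(F)^n$.

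I do not anticipate any serious obstacle at this stage. All the substantive work — the group-theoretic tower (Lemma \ref{lem:tower}), the class field theoretic rank estimate for a cyclic step (Theorem \ref{main0}), and the telescoping over the tower (Lemma \ref{lem}) — is already done. What remains is purely bookkeeping: tracking constants, translating the base of the logarithm from $\ell$ to $D$ in the exponent, and separating the small case $D\leq 2$ from the generic case.
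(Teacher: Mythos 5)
Your proof follows the same route as the paper: apply Lemma \ref{lem} with $t$ the number of ramified places of $E/F$, bound $t$ via $\omega(\disc(E/F))$ and Proposition \ref{omega}, and exponentiate to pass from ranks to sizes of $\ell$-torsion groups. The main bound for $D>2$, the identity $h_\ell = \ell^{\rk_\ell}$, the base-change $\ell^a = D^{a\log\ell/\log D}$, and the $O_{\eps,F,n}(D^\eps)$ conclusion are all handled correctly, and you are right to flag the infinite-place subtlety for $\ell = 2$: since $\omega(\disc(E/F))$ counts only finite primes, one has $t \le \omega(\disc(E/F)) + d$ for $\ell = 2$, and the extra $d$ must be absorbed by enlarging $C$ (which works because $\log D/\log\log D$ is bounded away from $0$ for integers $D\ge 3$). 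The paper's one-sentence proof sketch does not mention this point.

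There is, however, a slip in the $D=1$ case. You assert that $D=1$ means $E/F$ is \emph{everywhere} unramified and hence $t=0$. But $\Disc(E/F)=1$ only says the relative discriminant \emph{ideal} is trivial, i.e.\ $E/F$ is unramified at all finite primes; for $\ell = 2$ the real places of $F$ can still ramify (this is exactly the discrepancy between the Hilbert class field and the narrow Hilbert class field). In that situation $t = t_\infty$ can be positive, and Lemma \ref{lem} only yields $\rk_\ell(\Cl_E) \le rnt_\infty + n\rk_\ell(\Cl_F)$, not the claimed $n\rk_\ell(\Cl_F)$. For $\ell > 2$ infinite places never ramify, so your argument is complete there. (The paper's own treatment of the $D=1$ case is unaccompanied by proof and carries the same latent issue, but as written your justification is incomplete when $\ell=2$.)
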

Note that the case $D=2$ is not possible by Hilbert's ramification theory. The reader can also find an independent proof of Theorem \ref{thm:main1} by G. Gras \cite[pp. 2 and 9]{Gras}, which he gave after seeing our preprint. We remark that the Galois group of all fields in his family ${\mathcal F}_K^{p^e}$ are also $p$--groups.

 \begin{remark}
 	Note that we easily get the following estimate for the $\ell^s$-torsion $$h_{\ell^s}(E) \le h_{\ell}(E)^s \le h_{\ell}(F)^{ns} \cdot D^{\frac{Cndrs\log \ell}{\log\log D}} =  O_{\epsilon,F,n,s}(D^{\epsilon}).$$
 \end{remark}
%Let us remark that for imaginary quadratic number fields \cite[Theorem 1.4]{Smi17} is able to prove Cohen-Lenstra-Gerth type conjectures for the full Sylow-2-subgroup of the class group. Using GRH this result is generalized in \cite{KoPa18} to odd $\ell$.

Theorem \ref{main1} or Lemma \ref{lem} are not expected to be sharp, but will be sufficient for our purpose, since we only aim at proving Conjecture \ref{conj:ell-torsion}. However, it is also an independent interesting question to study the upper bound at a finer scale. We mention results along this direction \cite{KoPa19,KoPa20} for $F=\Q$, where a sharp upper bound is obtained for $\ell=2$ and certain special family of multi-quadratic number fields. 

% We mentioned in Remark \ref{rem:up} that there are possible improvements. In the case $F=\Q$, $G=C_2^r$, and $\ell=2$ \cite{KoPa19} prove an interesting result for some families of multi-quadratic number fields, which in our notation is:
%$\rk_\ell(\Cl_E)\leq t \frac{n}{2} -n +1.$

%An important part of their result is that the upper bound will be attained infinitely many often. For us this result is not useful since $G$ is chosen very special, $F=\Q$, and furthermore they only consider a family of fields with that group.

%%Using GRH a similar result can be proved \cite{KoPa18} for the Sylow-$\ell$-subgroup of cyclic $C_\ell$-extensions of $\Q$. 

\section{Discriminant Multiplicity Conjecture for Nilpotent Extensions}\label{sec:multiplicity}
The goal of this section is to prove Theorem \ref{thm:disc-multi-nilpotent}  which answers Conjecture \ref{conj:disc-multi} positively in the nilpotent case. As usual we can reduce the nilpotent case to the $\ell$--group case, but we have to be a little bit careful (see Lemma \ref{lem:p-grp-2-nilpotent}) that this reduction is compatible with permutation groups. For the $\ell$--group case we use the upper bound proved in Theorems \ref{main1} and \ref{thm:main1}. 
% \begin{theorem}\label{thm:disc-multi-nilpotent}\label{thm:disc-multi}
%	Given a transitive nilpotent permutation group $G\leq S_n$ and a number field $F$,  the number $a_D$ of $G$-extensions $E/F$ with $\Disc(E/F) = D$ is bounded above by 
%	$$a_D = O_{\epsilon, F, n}(D^{\epsilon}) \mbox{ for all }\eps>0. $$ 
%\end{theorem}
%\marginpar{What is $n$?, Generalize to absolute discriminant}
In a first step we prove this theorem for $\ell$-groups and then use this for proving the case of arbitrary nilpotent groups. For the latter step
 we need a group theoretic lemma. This states that any transitive nilpotent permutation group $G\leq S_n$ is isomorphic to a natural direct product of its $\ell$-Sylow subgroups. It is a standard fact that all nilpotent groups are isomorphic to the direct product of their $\ell$-Sylow subgroups, however we emphasize that we need to prove the isomorphism in the category of \textit{permutation groups}. Equivalently, this means that all nilpotent $G$-extensions (not necessarily Galois) can be realized as a compositum of $\ell$-extensions. 

\begin{lemma}\label{lem:p-grp-2-nilpotent}
	A transitive nilpotent permutation group $G\leq S_n$ is permutation isomorphic to the natural direct product of transitive permutation groups $G_\ell\leq S_{n_\ell}$,
	$$G \simeq \prod_\ell G_\ell \mbox{ with } n=\prod_\ell n_\ell,$$
	where the $G_\ell$ are isomorphic to the $\ell$-Sylow subgroups of $G$ and $n_\ell$ is the maximal $\ell$-power dividing $n$.
\end{lemma}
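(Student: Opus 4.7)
The plan is to lift the abstract Sylow decomposition $G=\prod_\ell G_\ell$ (valid because a finite nilpotent group is the internal direct product of its Sylow subgroups) to a decomposition of the transitive $G$-set $\Omega=\{1,\ldots,n\}$ on which $G$ acts.

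First I would fix a point $\omega\in\Omega$ and set $H:=G_\omega$, so that $\Omega\cong G/H$ as $G$-sets. The key observation is that $H$, as a subgroup of the finite nilpotent group $G$, is itself nilpotent; hence $H$ is the direct product of its own Sylow subgroups, and each of those is an $\ell$-subgroup of $G$, so it sits inside the unique $\ell$-Sylow $G_\ell$. This forces the $\ell$-Sylow of $H$ to be exactly $H_\ell:=H\cap G_\ell$, yielding the compatible decomposition $H=\prod_\ell H_\ell$.

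Next I would verify that the evident multiplication map
$$\prod_\ell (G_\ell/H_\ell)\;\longrightarrow\;G/H,\qquad (g_\ell H_\ell)_\ell\;\longmapsto\;\Bigl(\prod_\ell g_\ell\Bigr)H,$$
is a bijection of $G$-sets when $G=\prod_\ell G_\ell$ acts on the left-hand side via the product action (each factor $G_\ell$ acting only on $G_\ell/H_\ell$). Well-definedness and $G$-equivariance are immediate from the commutation of distinct Sylow factors, and bijectivity follows either by a direct argument using $H=\prod_\ell H_\ell$ or simply by the counting identity $|G/H|=\prod_\ell |G_\ell/H_\ell|$. Writing $\Omega_\ell:=G_\ell/H_\ell$ and $n_\ell:=|\Omega_\ell|$, this realises $\Omega$ as $\prod_\ell \Omega_\ell$ with each $G_\ell$ acting transitively and faithfully on $\Omega_\ell$ (faithfulness transfers from the faithful action of $G$ on $\Omega$: an element $g_\ell\in G_\ell$ fixing $\Omega_\ell$ pointwise acts trivially on the whole product).

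Finally, since each $n_\ell$ is a power of $\ell$, the integers $n_\ell$ are pairwise coprime, so $n_\ell$ is precisely the maximal $\ell$-power dividing $n=\prod_\ell n_\ell$. The canonical product embedding $\prod_\ell S_{n_\ell}\hookrightarrow S_n$ then accommodates $G$ as the desired permutation subgroup. The only mildly subtle point to verify carefully is the compatibility $H=\prod_\ell(H\cap G_\ell)$, which rests on nilpotency passing to subgroups together with the uniqueness of Sylow subgroups in a nilpotent group; once this is in hand, everything else is formal.
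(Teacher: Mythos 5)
Your proof is correct and follows essentially the same route as the paper's: decompose $G$ into its Sylow subgroups, observe the point stabilizer $H$ decomposes compatibly as $H=\prod_\ell(H\cap G_\ell)$, and realise $G/H\cong\prod_\ell G_\ell/H_\ell$ as $G$-sets. You supply some details the paper leaves implicit (the explicit bijection and the transfer of faithfulness), but the underlying argument is the same.
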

\begin{proof}
	Firstly, it is a standard result that a nilpotent group $G$ is equal to the direct product $\prod_{\ell\mid n} \Syl_\ell(G)$ where $\Syl_\ell(G)$ is the $\ell$-Sylow subgroup of $G$. Let $H\leq G$ be a stabilizer of a point. This means that $G$ can be realized by the action of $G$ on the left cosets of $G/H$.
	Now $H$ is nilpotent as well and therefore it is a direct product of its Sylow subgroups. We get:
	\[H=\prod_{\ell\mid n} \Syl\nolimits_\ell(H) \text{ with }\Syl\nolimits_{\ell}(H) \leq \Syl\nolimits_{\ell}(G).  \]
	Now we can define the permutation groups $G_\ell$ by the action of $\Syl_\ell(G)$ on the left cosets of $\Syl_\ell(G)/\Syl_\ell(H)$ for each prime $\ell$ dividing $n$.
	Since $H=\prod_{\ell\mid n} \Syl\nolimits_\ell(H)$ we see that $G$ is permutation isomorphic to the natural direct product $\prod_{\ell\mid n} G_\ell \leq S_n$ of permutation groups. 
%For any prime $q$ dividing $n$, let us define the canonical projection $\pi_q: \prod_{\ell} \Syl_\ell(G) \to \Syl_q(G)$.
%	
%    We identify the Sylow-$p$ subgroup $G_p\leq G$ with $ G_p \times \prod_{q\neq p} \{ e \} \leq \prod_p G_p$.
%\Juergen{Make this easier}
%	Next, we show that an arbitrary subgroup $H\leq G$ satisfies $H \simeq \prod_\ell H_\ell$ (as abstract groups) where we define $H_\ell:= \pi_\ell(H) \leq \Syl_\ell(G)$. On the one hand, by definition it is obvious that $H\leq \prod_\ell H_\ell$. On the other hand, by Chinese remainder theorem there exists an integer $s$ such that 
%	\[s\equiv 1  \bmod |H_\ell| \mbox{ and }s\equiv 0 \bmod |H_q|  \mbox{ for all }q\neq \ell.\] Therefore if $h = (h_\ell)_\ell\in H$, then $(\id, \ldots, h_p, \ldots, \id) = h^{s} \in H$, which implies $H_p \times \prod_{q\ne p} \{ \id\} \leq H$ and thus $\prod_\ell H_\ell\leq H$.  
%	
%	
%	Now suppose the permutation representation $G\leq S_n$ is realized by the action of $G$ on the left cosets of $g_iH$ for $i = 1, \ldots, n$. Analogously, we define the permutation groups $G_\ell\leq S_{n_\ell}$ by the action of $\Syl_\ell(G)$ on left cosets of $\Syl_\ell(G)/H_\ell$. It is isomorphic to the componentwise action of $\prod_{\ell} G_\ell$ on the left cosets of $ \prod_{\ell}g_\ell H_\ell$, i.e., $G\simeq \prod_\ell G_\ell$ as a permutation group of degree $n = [G:H]= \prod_\ell [G_\ell:H_\ell]$. 	
\end{proof}

\begin{lemma}\label{specialcase}
	Let $F$ be a number field of degree $d$, $\ell$ be a prime number, and $\disc$ be an ideal of $\OO_F$. Then the number of $C_\ell$-extensions $E/F$ with $\disc(E/F)=\disc$
	is bounded above by
	\[ O_{d, \ell}(h_{\ell}(F)\cdot \ell^{\omega(\disc)})=O_{\epsilon, d, \ell}( h_{\ell}(F) \cdot D^{\epsilon}) = O_{\epsilon, F,\ell}( D^{\epsilon}) \mbox{ for all }\eps>0.\]   
\end{lemma}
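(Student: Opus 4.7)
The plan is to parametrize $C_\ell$-extensions via class field theory and then show that both the number of possible conductors and the number of extensions with a given conductor are small.

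First I would recall that $C_\ell$-extensions $E/F$ correspond, via class field theory, to surjective homomorphisms $\Cl_F(\idf)\twoheadrightarrow C_\ell$ (up to automorphisms of $C_\ell$), where $\idf$ is the conductor of $E/F$. Equivalently, they correspond to index-$\ell$ subgroups of $\Cl_F(\idf)$, so the number of such extensions with conductor dividing $\idf$ is at most $\frac{\ell^{\rk_\ell(\Cl_F(\idf))}-1}{\ell-1}\leq \ell^{\rk_\ell(\Cl_F(\idf))}$. From the standard exact sequence
\[
\OO_F^\times \longrightarrow (\OO_F/\idf)^\times \longrightarrow \Cl_F(\idf) \longrightarrow \Cl_F \longrightarrow 1
\]
one obtains
\[
\rk_\ell(\Cl_F(\idf)) \leq \rk_\ell(\Cl_F) + \rk_\ell\bigl((\OO_F/\idf)^\times\bigr).
\]
For each prime $\idp\mid \idf$ the local unit group $(\OO_F/\idp^{v_\idp(\idf)})^\times$ contributes $\ell$-rank at most $1$ if $\idp\nmid \ell$, and at most $[F_\idp:\Q_\ell]+1=O_{d}(1)$ if $\idp\mid \ell$. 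Since there are at most $O_d(1)$ primes above $\ell$, we get $\rk_\ell((\OO_F/\idf)^\times)\leq \omega(\idf)+O_d(1)$.

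Next I would count the number of possible conductors $\idf$ compatible with a given discriminant $\disc$. By the conductor-discriminant formula for cyclic extensions of prime degree, $\disc(E/F)=\idf(E/F)^{\ell-1}$ at primes not above $\ell$, so at every tame prime the exponent of $\idf$ is determined by $\disc$. At each of the $O_d(1)$ primes $\idp$ above $\ell$, the conductor exponent $v_\idp(\idf)$ lies in a finite set bounded in terms of $[F_\idp:\Q_\ell]$ and $\ell$, and is again determined (up to $O_{d,\ell}(1)$ possibilities) by $v_\idp(\disc)$. Hence the total number of possible conductors is $O_{d,\ell}(1)$, and each such $\idf$ satisfies $\omega(\idf)\leq \omega(\disc)+O_d(1)$.

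Combining, the count is bounded by
\[
O_{d,\ell}(1)\cdot \ell^{\rk_\ell(\Cl_F)}\cdot \ell^{\omega(\disc)+O_d(1)}=O_{d,\ell}\bigl(h_\ell(F)\cdot \ell^{\omega(\disc)}\bigr).
\]
The final asymptotic $O_{\epsilon,F,\ell}(D^\epsilon)$ follows since $h_\ell(F)=O_F(1)$ and, by Proposition \ref{omega}, $\omega(\disc)\ll_d \log D/\log\log D$, so $\ell^{\omega(\disc)}=O_{\epsilon,d,\ell}(D^\epsilon)$. The main subtlety, and the step I would be most careful with, is the local analysis at primes above $\ell$: one must verify that wild ramification contributes only $O_{d,\ell}(1)$ to the $\ell$-rank of $(\OO_F/\idf)^\times$ and only $O_{d,\ell}(1)$ choices of conductor exponent given the discriminant exponent, which is a purely local computation in $\Q_\ell$-algebras of bounded degree.
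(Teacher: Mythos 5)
Your proof is correct and follows essentially the same route as the paper's: parametrize $C_\ell$-extensions by index-$\ell$ subgroups of the ray class group of conductor $\idf$ with $\idf_0^{\ell-1}=\disc$, bound $\rk_\ell(\Cl_\idf)$ via the standard exact sequence (getting a contribution of $\leq 1$ per tame prime, $O_d(1)$ from primes above $\ell$, and $O_d(1)$ from infinite places when $\ell=2$), and then invoke $\omega(\disc)\ll_d \log D/\log\log D$. The only cosmetic difference is that you allow $O_{d,\ell}(1)$ choices of conductor exponent at wild primes, whereas the conductor-discriminant formula $\disc=\idf_0^{\ell-1}$ in fact determines $\idf_0$ exactly at all primes; this does not affect the argument.
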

\begin{proof}
	Let $E/F$ be such an extension. Then by class field theory the finite part $\idf_0$ of the conductor has the property that $\disc=\idf_0^{\ell-1}$. Denote
	by $\idf_\infty$ the set of real places, if $\ell=2$ and let $\idf_\infty=\emptyset$ otherwise.
	Then $E$ is contained in the ray class field of $\idf=\idf_0\idf_\infty$ and we need an upper bound on the size of the $\ell$-torsion of this ray class group $\Cl_\idf$.
	By class field theory we have the following short exact sequence: 
$$ \prod_{\idp\mid \idf_0} (\OO_F/\idp^{e_\idp})^* \times \prod_{\idp\mid \idf_\infty} C_2 \to \Cl_\idf \to \Cl_F \to 0\;\; \mbox{ with }\idf_0=\prod_{\idp\mid \idf_0} \idp^{e_\idp}.$$
Therefore the $\ell$-rank of $\Cl_\idf$ is bounded above by $\sum_{\idp\mid\idf_0} \rk_\ell((\OO_F/\idp^{e_\idp})^*) + d + \rk_\ell(\Cl_F)$. Note that
$\rk_\ell((\OO_F/\idp^{e_\idp})^*) \leq 1$ for $\ell\not\in\idp$ and $\rk_\ell((\OO_F/\idp^{e_\idp})^*) \leq [F_\idp:\Q_\ell]+1$, if $\ell\in \idp$.
Considering the wildly ramified primes, the extreme case happens when all wildly ramified primes are dividing $\idf_0$ and $\ell$ splits in $F$. In this
situation the wildly ramified primes might increase the $\ell$-rank by $2d$. The infinite places might increase the 2-rank by $d$ and therefore we get the following upper bound:
\[\rk_\ell(\Cl_\idf)\leq \omega(\idf_0)+3d+\rk_\ell(\Cl_F).\]

Therefore the number of $C_{\ell}$-extensions $E/F$ with $\disc(E/F) = \disc$ is bounded by $|\Cl_\idf[\ell]|=O_{d, \ell}(h_{\ell}(F)\cdot \ell^{\omega(\disc)})=O_{\epsilon, d, \ell}( h_{\ell}(F) \cdot D^{\epsilon}) = O_{\epsilon, F,\ell}( D^{\epsilon})$ for all $\eps>0$ by Proposition \ref{omega}. 
	\end{proof}

\begin{proof}[Proof of Theorem \ref{thm:disc-multi-nilpotent}]
	Denote by $b_D$ the number of ideals $\disc$ of $\OO_F$ such that $|\disc|=D$. We claim that $b_D= O_{d}(C^{\omega(D)})$ for some $C$ depending on the degree $d$. This is bounded by $O_{\epsilon, d}(D^{\epsilon})$. Therefore it suffices to prove that the number of $G$-extensions $E/F$ with $\disc(E/F) = \disc$ is bounded by $O_{\epsilon, F, n}(D^{\epsilon})$.
	
	In order to prove the claim note that $b_D$ is multiplicative and therefore it suffices to prove it for prime powers $D=p^s$. The worst case happens
	when $p$ is split in $F$. 
%	\marginpar{@Jiuya: Is this okay? I think so. -Jiuya}
	Then the number of ideals is equal to $\binom{d+s-1}{d-1}\leq (s+1)^{d-1}$ and $s=O_d(\log D)$ which gives $\binom{d+s-1}{d-1} = O_d((\log D)^{d-1})=O_{\epsilon, d}(D^{\epsilon})$ for all $\eps>0$.
		%Let $\disc\normal \OO_F$ be an integral ideal of $\OO_F$. Notice that for any integer $D>0$, the number of integral ideals $\disc$ such that $|\disc| = D$ is %bounded by $O_{d}(C^{\omega(D)})$ for some $C$ depending on the degree $d$, which is bounded by $O_{\epsilon, d}(D^{\epsilon})$ by Proposition \ref{omega}.

	Let us assume that $G\leq S_n$ is a transitive $\ell$-group of degree $n = \ell^r$. We proceed by induction on $r$. When $r = 1$, then $G=C_\ell$ and we apply Lemma \ref{specialcase}.
	
	Suppose the statement holds for $\ell$-extensions of degree $n = \ell^r$. Given an arbitrary $\ell$-extension $E/F$ of degree $\ell^{r+1}$, there is a chain of subfields $E = E_{r+1} \geq E_{r} \geq \cdots \geq E_{0} = F$ using Lemma \ref{lem:tower}. 
	Denote $\disc(E_r/F)= \fm$, then we have $\fm^\ell \cdot \fm' = \disc$ by the discriminant formula for towers. 
	
	By induction for $n=\ell^r$, the number of extensions $E_{r}/F$ with $\disc(E_r/F) = \fm \mid \disc$ is bounded by $O_{\epsilon, F, n}( |\fm|^{\epsilon} )$. Using Lemma \ref{specialcase} the number of $E_{r+1}/E_r$ with relative discriminant $\Nm_{E_r/F}(\disc(E_{r+1}/E_r)) = \fm'$ is bounded by $O_{\epsilon, d, \ell} (h_\ell(E_r) \cdot |\fm'|^{\epsilon} )$. Since $h_\ell(E_r)=O_{\eps,F,n}(|\fm|^\eps)$ by Theorem  \ref{thm:main1}, we get the bound $O_{\epsilon, F, n}( |\fm'|^{\epsilon} |\fm|^{\epsilon})$ for the number of $E_{r+1}/E_r$ with relative discriminant $\Nm_{E_r/F}(\disc(E_{r+1}/E_r)) = \fm'$.
	
	  Therefore for each $\fm^\ell\mid \disc$, the number of extensions $E_{r+1}/F$ with $\disc(E_{r+1}/F)=\disc$ and $\disc(E_r/F)= \fm$ is bounded by $O_{\epsilon, d, n} (D^{\epsilon})$. The number of divisors $\fm\mid \disc$ is bounded by $O_{\epsilon}(D^{\epsilon})$. So the number of $E_{r+1}/F$ with $\disc(E_{r+1}/F) =\disc $ in total is bounded by $O_{\epsilon, F, n}( D^{\epsilon})$. This finishes the proof of the discriminant multiplicity conjecture for general $\ell$-extensions. 
	
	Secondly, we deduce the discriminant multiplicity conjecture for nilpotent extensions from the one for $\ell$-extensions. 
	Given a transitive nilpotent permutation group $G\leq S_n$, by Lemma \ref{lem:p-grp-2-nilpotent}, we have 
	\[G \simeq \prod_{i=1}^s G_{\ell_i} \leq \prod_{i=1}^s S_{\ell_i^{s_i}} \leq S_n \mbox { for }n=\prod_{i=1}^s \ell_i^{s_i}.
	\]  Therefore each $G$-extension $E/F$ is the compositum of $\ell_i$-extensions $E_{\ell_i}/F$ with $\Gal(E_{\ell_i}/F) = G_\ell\leq S_{\ell_i^{s_i}}$. Therefore the number of $G$-extensions $E/F$ with $\disc(E/F)=\disc$ is bounded by the number of tuples $(E_{\ell_1},\ldots,E_{l_s})$ of $\ell_i$-extensions with $\disc(E_{\ell_i}/F) \mid \disc$ and $\Gal(E_{\ell_i}/F) = G_{\ell_i}$ for each $i$. Combining the result on $\ell$-extensions and the fact that the number of divisors of $\disc$
%	\marginpar{more precise we need the number of ways to write $\disc$ as a product with $\omega(n)$ factors. //Yes, here I am doing very coarse upper bound. Basically for every $\ell$-group we get $D^{\epsilon}$, then we take the product over all $\ell$, but we treat $\omega(n)$ as some constant from the group $G$, so that should be no problem. -Jiuya}
	 is bounded by $O_{\epsilon, n}(D^{\epsilon})$, we deduce that the number of $E_\ell/F$ for each prime $\ell\mid n$ is bounded by $O_{\epsilon, F, n_{\ell}}( D^{\epsilon})$. Taking the product over all $\ell \mid n$, we get that the number of such tuples is bounded by 
	 $O_{\epsilon, F, n}( D^{\omega(n)\epsilon}) = O_{\epsilon, F, n}(D^{\epsilon})$.
\end{proof}

\section{Malle's Conjecture for Nilpotent Extensions}\label{sec:Malle}
The goal of this section is to prove Theorems \ref{thm:1} and \ref{thm:weakmalle}. The proof of Theorem \ref{thm:1} is split into proving Theorems \ref{thm:C-B} and \ref{thm:malle-2-disc-multi}. The proof of Theorem \ref{thm:weakmalle} is the content of Corollary \ref{cor:weakmalle}.
\begin{theorem}\label{thm:C-B}
	Let $G\leq S_n$ be a transitive permutation group  and $F$ be a number field. Assume that Conjecture \ref{conj:disc-multi} is true for $F$ and $G$. Then Conjecture \ref{conj:Mal1} is also true for $F$ and $G$, i.e. the number of $G$-extensions $E/F$ with $\Disc(E/F)\le X$ is bounded above by:
	$$N(F,G; X) = O_{F, \epsilon,n}( X^{1/a(G)+\epsilon}) \mbox{ for all }\eps>0. $$
\end{theorem}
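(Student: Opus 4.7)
The plan is to bound
\[N(F,G;X) = \sum_{D \le X} a_D\]
by combining the pointwise bound $a_D = O_{\epsilon, F, n}(D^\epsilon)$ from Conjecture \ref{conj:disc-multi} with a sparsity estimate for the set of integers $D$ that actually arise as the absolute discriminant of a $G$-extension. The point is that this admissible set is much thinner than $\{1,\ldots,X\}$: its size is $O(X^{1/a(G)+\epsilon})$.

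First I would show that the possible discriminant ideals are forced, up to a bounded wild contribution, to have all prime exponents at least $a(G)$. Write $\disc(E/F) = \mathfrak{d}_t \mathfrak{d}_w$, where $\mathfrak{d}_w$ is supported on the finitely many primes of $\OO_F$ lying over rational primes dividing $|G|$ and $\mathfrak{d}_t$ is supported on the remaining primes. Any prime $\idp \mid \mathfrak{d}_t$ is tamely ramified in the Galois closure $\hat E/F$, so its inertia is cyclic, generated by some $g \ne \id$, and the standard permutation--discriminant formula yields $v_\idp(\disc(E/F)) = \ind(g) \ge a(G)$. For primes in the support of $\mathfrak{d}_w$, the exponent is bounded by a constant depending only on $|G|$ (standard bound on the wild different), so $\mathfrak{d}_w$ ranges over a finite set $\mathcal{W}_{F,G}$ depending only on $F$ and $G$.

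Next I would count the admissible tame parts. Let $T(Y)$ denote the number of integral ideals of $\OO_F$ of norm at most $Y$ in which every prime appears with multiplicity at least $a(G)$. Its Dirichlet series has local factor at $\idp$ equal to
\[1 + \sum_{k \ge a(G)} N(\idp)^{-ks} \;=\; 1 + \frac{N(\idp)^{-a(G)s}}{1 - N(\idp)^{-s}},\]
whose rightmost singularity sits at $s = 1/a(G)$. A standard Tauberian estimate (or an elementary count via $\mathfrak{d} = \mathfrak{b}^{a(G)} \mathfrak{c}$ with $\mathfrak{c}$ dividing $\mathfrak{b}^{a(G)-1}$) then gives $T(Y) = O_{F,\epsilon}(Y^{1/a(G)+\epsilon})$. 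Since $|\mathcal{W}_{F,G}|$ is a constant, the total number $B(X)$ of admissible discriminant ideals of norm $\le X$ satisfies $B(X) = O_{F,G,\epsilon}(X^{1/a(G)+\epsilon})$.

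Finally, Conjecture \ref{conj:disc-multi} gives $\sup_{D \le X} a_D \le C_{\epsilon,F,n}\, X^\epsilon$, and since $a_D = 0$ unless $D$ is among the at most $B(X)$ admissible norms,
\[N(F,G;X) \;\le\; B(X) \cdot \sup_{D \le X} a_D \;=\; O_{F,n,\epsilon}\bigl(X^{1/a(G) + 2\epsilon}\bigr),\]
and rescaling $\epsilon$ yields the stated bound. The main technical ingredient is the permutation--discriminant formula forcing $v_\idp \ge a(G)$ at every tame ramified prime; once this sparsity is in place, the Dirichlet-series count of Step 2 and the final summation are routine.
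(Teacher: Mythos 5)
Your proof is correct and takes essentially the same route as the paper's: bound $N(F,G;X)$ by $\bigl(\sup_{D\le X} a_D\bigr)$ times the number of admissible discriminants up to $X$, and show the latter set is $O_\epsilon(X^{1/a(G)+\epsilon})$ via a Dirichlet series whose rightmost pole is at $s=1/a(G)$. The paper works directly with rational integers $D$ and defines $\mathcal{A}=\{D : p\mid D \Rightarrow p^{a(G)}\mid D\}$, citing Malle's paper for the fact that $a_D=0$ when $D\notin\mathcal{A}$, whereas you work at the level of ideals of $\OO_F$ and re-derive the sparsity directly from the tame inertia formula $v_\idp(\disc)=\ind(g)\ge a(G)$ together with a uniform bound on the wild part $\mathfrak{d}_w$; that extra care is a genuine (if small) improvement in self-containedness, and your alternative elementary count $\mathfrak{d}=\mathfrak{b}^{a(G)}\mathfrak{c}$, $\mathfrak{c}\mid\mathfrak{b}^{a(G)-1}$, sidesteps the Tauberian theorem, which the paper itself remarks is unnecessary for the upper bound. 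The structure of the argument and the decisive estimate $\mathcal{A}(X)=O(X^{1/a(G)+\epsilon})$ are the same in both.
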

\begin{proof}
	Let $\mathcal{A} : = \{ D\in \mathbb{Z} : p \mid D \implies p^{a(G)} \mid D  \}$ and $\mathcal{A}(X):= \sharp \{ D\in \mathcal{A} : D<X \}$ be the counting function associated to $\mathcal{A}$. Let $a_D$ be the number of fields $E/F$ with Galois group $G$ and $D=\Disc(E/F)$. Note that $a_D=0$, if $D\not\in \mathcal{A}$, see \cite[Section $7$]{Mal02}. Then for any $\epsilon>0$, there exists a constant $C_{F, \epsilon,n}$ 
	by Conjecture \ref{conj:disc-multi} such that
	$$N(F, G;X) = \sum_{D\in \mathcal{A},D<X}  a_D \le \sum_{D\in \mathcal{A},D<X} C_{F, \epsilon,n}\cdot D^{\epsilon} \le C_{F, \epsilon,n}\cdot \mathcal{A}(X)\cdot X^{\epsilon}.$$
    We will show that
	\begin{equation}\label{eqn:aG-integer}
	\mathcal{A}(X) \sim C'\cdot X^{1/a(G)},
	\end{equation}
	for some $C'>0$, therefore $\mathcal{A}(X) = O(X^{1/a(G)})$. The generating series $f(s)$ of $\mathcal{A}$ is 
	$$f(s) = \prod_p (1+ \sum_{k\ge a(G)} p^{-ks}) = \zeta(a(G)s) \cdot g(s),$$
	where
	$$g(s) = \prod_p (1+ \sum_{a(G)+1 \le k\le 2a(G)-1} p^{-ks}),$$
   is a holomorphic function when $\Re(s)> 1/(a(G)+1)$. The function $f(s)$ thus has an analytic continuation to $\Re(s)\ge 1/a(G)$ except for a simple pole at $s = 1/a(G)$. We get (\ref{eqn:aG-integer}) from a Tauberian theorem, e.g. see \cite[p. 121]{Nar83}. Then the result follows since $C_{F, \epsilon,n}\cdot \mathcal{A}(X) \cdot X^{\epsilon} = O_{F,\epsilon,n}(X^{1/a(G)+\epsilon})$. 		
%	Then it follows from integration by parts, see e.g. Theorem $1.3.1$ in \cite{Sieve}, that
%	$$N(F, G; X) \le \mathcal{A}(X)\cdot X^{\epsilon} -\int^{X}_{1} F(t) (t^{\epsilon})' dt = O(X^{1/a(G)+\epsilon}). $$
%	Then
%	$$N(F, G;X) = \sum_{D\in \mathcal{A},D<X}  a_D \le \sum_{D\in \mathcal{A},D<X} C\cdot D^{\epsilon} \le O(X^{1/a(G)+\epsilon}).$$
%	For the last inequality, we use integration by parts and the fact that \marginpar{Can you estimate the integral with the derivative?}
%
%	\marginpar{In Equation (4) the constant is 1? $O(X^{1/a(G)})$ enough?}
%	 Indeed 
\end{proof}
	Note in the above proof that $\mathcal{A}(X)=O_{\epsilon}(X^{1/a(G)+\epsilon})$ for all $\epsilon>0$ can be easily derived without using a Tauberian theorem. This is certainly sufficient to finish the proof.
Using Theorems \ref{thm:disc-multi-nilpotent} and \ref{thm:C-B} we immediately get the following corollary which is the content of Theorem \ref{thm:weakmalle}.
\begin{corollary}\label{cor:weakmalle}
Let $G$ be a transitive nilpotent group. Then Conjecture \ref{conj:Mal1} is true for all $\epsilon>0$ and all number fields $F$.
\end{corollary}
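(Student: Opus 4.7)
The plan is to give a one-step deduction: Corollary \ref{cor:weakmalle} is essentially the composition of Theorem \ref{thm:disc-multi-nilpotent} with Theorem \ref{thm:C-B}. First I would fix a transitive nilpotent permutation group $G \leq S_n$ and a number field $F$. By Theorem \ref{thm:disc-multi-nilpotent}, Conjecture \ref{conj:disc-multi} is valid for the pair $(F, G)$; that is, the multiplicity $a_D$ of the discriminant $D$ among $G$-extensions of $F$ satisfies $a_D = O_{\epsilon, F, n}(D^{\epsilon})$ for every $\epsilon > 0$.

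Next I would invoke Theorem \ref{thm:C-B} with exactly this $G$ and $F$: its hypothesis is precisely the discriminant multiplicity bound we just obtained, and its conclusion is the assertion $N(F, G; X) = O_{F, \epsilon, n}(X^{1/a(G) + \epsilon})$ for all $\epsilon > 0$, which is Conjecture \ref{conj:Mal1} for $(F, G)$. Since $G$ was an arbitrary transitive nilpotent group, this completes the proof.

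There is essentially no obstacle here, as the real work was already done in Section \ref{sec:multiplicity} (upper bound for $h_\ell$ of $\ell$-extensions via genus-theoretic control, parlayed into a multiplicity bound through induction along the tower from Lemma \ref{lem:tower} and the compositum decomposition of Lemma \ref{lem:p-grp-2-nilpotent}) and in Theorem \ref{thm:C-B} (summing $a_D$ over the sparse set $\mathcal{A}$ of admissible discriminants). The only subtlety worth flagging is that one should confirm the constants in Theorem \ref{thm:C-B} depend only on $F$, $\epsilon$, and $n$ (not on further data of $G$), which is immediate from its statement; then the final bound for Corollary \ref{cor:weakmalle} inherits the same dependence.
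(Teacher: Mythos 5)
Your proposal is correct and matches the paper's own proof exactly: both simply compose Theorem \ref{thm:disc-multi-nilpotent} (Conjecture \ref{conj:disc-multi} for nilpotent $G$) with Theorem \ref{thm:C-B} (Conjecture \ref{conj:disc-multi} implies Conjecture \ref{conj:Mal1} for the same pair $(F,G)$). Nothing further is needed.
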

%\Juergen{Remove the following remark after the changes}
%\begin{remark}
%In order to show an inverse direction of the above corollary in the general setting, see \cite{EV05} where Conjecture \ref{conj:Mal1} on subgroups of $G^k$ for all integers $k$ implies Conjecture \ref{conj:disc-multi} for $G$-extensions. 
%\end{remark}

Now we prove some version of the inverse implication of Theorem \ref{thm:C-B}.
We cannot expect that Conjecture \ref{conj:Mal1} for a single $G$ implies Conjecture \ref{conj:disc-multi} for the same $G$, because it is easy to write down series of numbers $a_m$ which have a given asymptotic behavior, but a single $a_m$ is not bounded by $O(m^\epsilon)$ for all $\epsilon>0$. E.g. we can take the series 
\[a_m = \begin{cases} \sqrt{m} & \text{if $m$ is a square,}\\ 0 &\text{otherwise}.\end{cases} \text{ and we see }\sum\limits_{m\leq x} a_m \sim x/2,  \]
but $a_m=\sqrt{m}$ for infinitely many $m$. In order to prove Conjecture \ref{conj:disc-multi} we need some control over the higher moments. We can get this control when we assume Conjecture \ref{conj:Mal1} for so-called allowable permutation groups (see Definition \ref{def:allow}), which are suitable subgroups of the direct product $G^k$. Using this assumption we are able to prove in Theorem \ref{thm:malle-2-disc-multi} that Conjecture \ref{conj:disc-multi} holds. 

 Let us consider $G$-extensions $E_i/F$ for $1\leq i\leq k$ for a transitive $G\leq S_n$. 
 We are interested in the Galois group of the tensor product $E= \otimes_{i=1}^k E_i$ which is a subgroup of $G^k$. Note that $K=K_1\cdots K_m$ is a product of fields and for $m=1$ it can be interpreted as the compositum of the $E_i$. We define $\rho: G_{F} \to G^k \leq S_{n^k}$, where $G_F$ denotes the absolute Galois group of $F$. The image $\rho(G_{F})$ might not be transitive of degree $n^k$, equivalently, this means that $K$ is not a field.
 Note that the number of orbits of $\rho(G_{F})$ is equal to $m$ and if ordered in the right way we get that $|O_i|=[K_i:F]$.
   Choosing one orbit $O:=O_i$ of size $nd$ and the corresponding field $E:=K_i$ we get a permutation representation for $\Gal(E/F)\leq S_{nd}$. 

%In general, for each $k$-tuple of $G$-extensions $E_i$ for $1\le i\le k$, we get $\rho: G_{F} \to G^k \leq S_{n^k}$, where $G_F$ denotes the absolute Galois group of $F$. The image $\rho(G_{F})$ might not be transitive of degree $n^k$, equivalently, this means that the compositum $E= \prod_{i=1}^k E_i$ has degree less than $n^k$ over $F$. Take an arbitrary orbit $O$ as a subset of the $n^k$ points, and then we can pick for each $i$ a representative of $E_i$ inside an isomorphism class and get $[E:F] = |O|$. Denote $|O| = nd$ and then we get a permutation representation for $\Gal(E/F)\leq S_{nd}$. 
\begin{definition} \label{def:allow}
	Given a transitive $G\leq S_n$ and an integer $k$. We say $U\leq G^k$ is an \emph{allowable subgroup} of the natural direct product $G^k$ if for every $1\le i\le k$, we have $\pi_i(U)= G$ where $\pi_i: G^k \to G$ is the natural projection to the $i$-th component. If $O\subseteq \{ 1, \cdots, n^k \}$ is an orbit of $U$ of size $nd$, we will say that the transitive  action of $U$ on $O$ is an \emph{allowable permutation subgroup} $H\leq S_O$ of $G^k$.
\end{definition}

%\jiuya{All $H$ on $O$ are isomorphic as abstract groups}
%It suffices to notice that the induced homomorphism $H\to S_{O}$ for each orbit $O$ is injective: if $h\in H \subset G^k$ fixes every element in an orbit $O$, then $h$ fixes every letter in $\{ 1, \cdots, n\}$ since $\pi_i(H) = G$, which implies that $h = e$ as $G\to S_n$ is injective. 

\begin{theorem}\label{thm:malle-2-disc-multi}
	Let $F$ be a number field and $G\leq S_n$ be a transitive group. Then the correctness of Conjecture \ref{conj:Mal1} for $F$ and for all allowable permutation subgroups $H$ of $G^k  \leq S_{n^k}$ for all $k>0$ implies the correctness of Conjecture \ref{conj:disc-multi} for $F$ and $G$.
	 \end{theorem}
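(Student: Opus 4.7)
\medskip

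The plan is to deploy a $k$-th moment argument in the spirit of the familiar counting bound ``moments $\Rightarrow$ pointwise.'' For every integer $k\geq 1$, the quantity $a_D^k$ counts ordered $k$-tuples $(E_1,\ldots,E_k)$ of $G$-extensions of $F$ each with discriminant exactly $D$. After fixing embeddings into $\bar F$, each tuple determines a compositum $E=E_1\cdots E_k\subseteq\bar F$, whose permutation Galois group over $F$ is, by the discussion preceding Definition~\ref{def:allow}, an allowable permutation subgroup $H\leq G^k\leq S_{n^k}$ (coming from one of the orbits of $\rho(G_F)$ on $\{1,\ldots,n\}^k$). Thus every tuple yields a number field whose permutation Galois group falls within the hypothesis of the theorem, to which Malle's conjecture applies.

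I would then proceed in the following steps. (i) Bound the discriminant: every prime of $\OO_F$ ramified in $E/F$ must be ramified in some $E_i/F$, and iterating the tower formula for discriminants (or applying conductor-discriminant inside $\tilde E$) yields $\Disc(E/F)\mid D^{\,c_1}$ for an exponent $c_1=c_1(n,H)$ depending only on the group-theoretic data. (ii) Apply Conjecture~\ref{conj:Mal1} to $H$: the number of $H$-extensions $E/F$ with $\Disc(E/F)\leq D^{c_1}$ is $O_{F,\epsilon,n,k}(D^{c_1/a(H)+\epsilon})$. (iii) Bound the multiplicity: a given $E$ arises from at most $O_{n,k}(1)$ ordered tuples $(E_1,\ldots,E_k)$, because each $E_i$ is determined by one of the finitely many projections of $\Gal(\tilde E/F)\leq G^k$ onto a factor. (iv) Sum over the finitely many allowable $H\leq G^k$ to obtain
\[
a_D^k \;\leq\; C_{F,n,k,\epsilon}\cdot D^{\,c(n)+\epsilon},
\]
where $c(n):=\sup_{k,\,H}\,c_1(n,H)/a(H)$. (v) Extract $k$-th roots, giving $a_D\leq C_{F,n,k,\epsilon}^{1/k}\cdot D^{c(n)/k+\epsilon/k}$; for any prescribed $\eta>0$ one selects $k$ so that $c(n)/k<\eta/2$ and then $\epsilon$ small, concluding $a_D=O_{F,\eta,n}(D^{\eta})$, which is Conjecture~\ref{conj:disc-multi}.

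The hard part will be establishing step (i) together with the finiteness of the supremum $c(n)=\sup c_1(n,H)/a(H)$, uniformly in $k$. Both $c_1(n,H)$ and $a(H)$ typically grow with $k$: for example, taking $G=C_2$ and $H=C_2^k\leq S_{2^k}$, a direct conductor-discriminant computation yields $c_1=2^{k-1}$ while $a(C_2^k)=2^{k-1}$, so the ratio is bounded (in fact equal to $1$) in this case. The general argument should proceed via a local analysis at each ramified prime $\idp\mid D$: the $\idp$-adic valuation of $\Disc(E/F)$ is controlled by $v_\idp(D)$ times a combinatorial invariant of the $H$-action on its transitive set, while $a(H)=\min_{1\neq h\in H}\ind(h)$ is governed by the same orbit structure. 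Without this matched growth, one would be forced to let the exponent $c(n)$ depend on $k$, obtaining only $a_D=O(D^{\eta(k)})$ with $\eta(k)$ bounded away from $0$, which would not suffice.
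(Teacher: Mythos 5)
Your proposal matches the paper's proof strategy essentially step for step: bound the $k$-th moment of $a_D$ by counting $H$-extensions for allowable permutation subgroups $H\leq G^k$, control the compositum discriminant, invoke Conjecture~\ref{conj:Mal1} for each $H$, and pass from a $k$-independent exponent in the moment bound to the pointwise bound $a_D = O_\epsilon(D^\epsilon)$. You have also correctly isolated the crux, namely that the exponent $c_1(n,H)/a(H)$ must be bounded uniformly over all $k$ and all allowable permutation subgroups $H$ of $G^k$. Your extraction of $k$-th roots is a slightly more direct closing step than the paper's Lemma~\ref{lem:k-moments-uniformity}, but these are interchangeable.

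The genuine gap is that you do not prove the uniform bound; you verify it for $G=C_2$, $H=C_2^k$, and gesture toward a ``local analysis at each ramified prime,'' but no local analysis is needed and a single example is not a proof. The paper proves exactly this as Lemma~\ref{lem:deg-over-ind-uniformity}, the purely group-theoretic inequality $\deg(H)/\ind(H)\le\deg(G)/\ind(G)$. The argument is short: an allowable permutation subgroup $H\leq S_{nd}$ comes with a surjective coordinate projection $\kappa:H\to G$, and the induced map on the underlying sets is $H$-equivariant with all fibers of size $d$. For any nontrivial $\tilde g\in H$ with nontrivial image $\bar g = \kappa(\tilde g)$, the preimage of each cycle of $\bar g$ of length $r$ has size $rd$ and hence splits into at most $d$ cycles of $\tilde g$, so $\#\{\text{cycles of }\tilde g\}\le d\cdot\#\{\text{cycles of }\bar g\}$, which rearranges to $\ind(\tilde g)\ge d\cdot\ind(\bar g)\ge d\cdot\ind(G)$, giving $nd/\ind(\tilde g)\le n/\ind(G)$. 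Combined with the discriminant bound $\Disc(E/F)\le C_2 D^{[E:\Q]}$ (so $c_1(n,H)=[F:\Q]\deg(H)$), one gets $c_1(n,H)/a(H)=[F:\Q]\deg(H)/\ind(H)\le[F:\Q]\deg(G)/\ind(G)$, a constant depending only on $G$ and $F$. Without this lemma your $c(n)$ is not known to be finite and the argument does not close.
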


 A similar idea was used in \cite[Prop. 4.8]{EV05} for $F=\Q$.  Instead of counting by discriminants $D$ they assume a Malle conjecture for counting number fields by the radical of the discriminant.
 %$\prod\limits_{p\mid D} p $ of $D$.
 We need two lemmata before we can prove this theorem.
\begin{lemma}\label{lem:k-moments-uniformity}
	Given a sequence $\{ a_m\}$ of non-negative real numbers, then the following statements are equivalent:
	\begin{enumerate}
		\item $\forall \epsilon> 0: a_m =O_\epsilon(m^{\epsilon})$. 
		\item There exists an $A>0$ such that for all $k>0: \sum\limits_{ X< m\le 2X} a_m^k = O_k(X^{A}).$
		\item
		There exists an $A>0$ such that for all $k>0:
		\sum\limits_{ m\le X} a_m^k = O_k(X^{A}).$
	\end{enumerate}
\end{lemma}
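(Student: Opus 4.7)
The proof will be a cyclic implication $(i) \Rightarrow (iii) \Rightarrow (ii) \Rightarrow (i)$, in which $(iii) \Rightarrow (ii)$ is immediate because the dyadic block sum is bounded above by the full sum up to $2X$, namely $\sum_{X < m \le 2X} a_m^k \le \sum_{m \le 2X} a_m^k \le C_k (2X)^A$.

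For $(i) \Rightarrow (iii)$, the subtlety is that the exponent $A$ must be chosen \emph{independently} of $k$. A fixed choice of $\epsilon$ would give a sum of size $\ll X^{1+k\epsilon}$, whose exponent grows with $k$, which is not allowed. Instead I will apply hypothesis $(i)$ with the shrinking parameter $\epsilon_k := 1/k$, obtaining $a_m \le C_{1/k}\, m^{1/k}$; hence $a_m^k \le C_{1/k}^k\, m$, and summing yields $\sum_{m \le X} a_m^k \le C_{1/k}^k\, X^2$. This establishes $(iii)$ with the uniform choice $A = 2$, the implied constant depending on $k$ as permitted by the $O_k$-notation.

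The main content is $(ii) \Rightarrow (i)$. The key observation is that a single term $a_m^k$ is dominated by a dyadic block sum containing it. Given $m \ge 2$, I set $X := m/2$ so that $m$ lies in the window $(X, 2X]$; then $a_m^k \le \sum_{X < m' \le 2X} a_{m'}^k \le C_k (m/2)^A$. Taking $k$-th roots yields $a_m \le C_k^{1/k}(m/2)^{A/k}$. Given any $\epsilon > 0$, I choose $k$ large enough that $A/k < \epsilon$; the resulting constant depends only on $\epsilon$, and we conclude $a_m = O_\epsilon(m^\epsilon)$.

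Neither implication is technically deep: the essential trick in both nontrivial directions is to play the $\epsilon$-slack in $(i)$ against the $k$-uniformity of $A$ in $(ii)$--$(iii)$ by letting one parameter shrink as a function of the other. The only point that demands care is tracking which constants are allowed to depend on $k$ and which must not, so that the quantifiers in each statement come out in the correct order.
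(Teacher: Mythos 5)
Your proof is correct, and while the easy implications match the paper's, your argument for the key direction $(ii)\Rightarrow(i)$ is genuinely different and notably cleaner. The paper uses a Chebyshev-type counting argument: for a fixed $\epsilon$, it bounds the number of $m\in(X,2X]$ with $a_m>X^\epsilon$ by
\[
X^{k\epsilon}\cdot\#\{X<m\le 2X:\ a_m>X^\epsilon\}\ \le\ \sum_{X<m\le 2X}a_m^k=O_k(X^A),
\]
then takes $k>A/\epsilon$ to force this count to be $0$ for $X$ large, concluding $a_m\le m^\epsilon$ eventually. You instead dominate a \emph{single} term by the dyadic block sum containing it, $a_m^k\le\sum_{m/2<m'\le m}a_{m'}^k\le C_k(m/2)^A$, and take $k$-th roots, which eliminates the intermediate ``count the exceptions and show it's eventually zero'' step entirely. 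Both exploit the same tension between the $\epsilon$-slack in $(i)$ and the $k$-uniformity of $A$ in $(ii)$, but your version is shorter and avoids any appeal to integrality of the exceptional count. For the easy directions the two proofs are essentially identical: your $(i)\Rightarrow(iii)$ with $\epsilon_k=1/k$ is the same trick the paper invokes with the throwaway remark that $(i)\Rightarrow(ii)$ holds ``for any $A>1$,'' and your $(iii)\Rightarrow(ii)$ is the paper's ``other direction is immediate.''
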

\begin{proof}
	It is easy to see that $(ii)$ and $(iii)$ are equivalent. To go from $(ii)$ to $(iii)$ it suffices to add up over dyadic ranges. The other direction is immediate.

To go from $(i)$ to $(ii)$ is immediate for any $A>1$.   To go from $(ii)$ to $(i)$, for any fixed $\epsilon>0$, we apply the following Chebychev type inequality which is easy to see here. We get for all $k>0$: 
$$  X^{k\epsilon} \cdot \sum_{X<m\le 2X, a_m> X^{\epsilon}} 1\le  \sum_{X< m\le 2X, a_m> X^{\epsilon}} a_m^k \stackrel{(ii)}{=} O_k(X^A).$$
 Choosing $k> A/\epsilon$ we get that \[\sum_{X<m\le 2X, a_m> X^{\epsilon}} 1= O_k(X^{A-k\epsilon}),\] which must be $0$ when $X$ is large enough. Therefore for any $\epsilon>0$, there exists a $X>0$ such that when $m> X$, we have $a_m \le m^{\epsilon}$. 
\end{proof}

We remind the reader that $\ind(G)$ is defined in Definition \ref{def:ind}. We denote the degree of the permutation group $G$ by $\deg(G)$.
\begin{lemma}\label{lem:deg-over-ind-uniformity}
	Let $G\leq S_n$ be a transitive permutation group, $k$ be an integer and $H$ be an allowable permutation subgroup of $G^k$. Then we get the following inequality:
		$$\frac{ \deg}{\ind}(H) \le \frac{\deg}{\ind} (G).$$
\end{lemma}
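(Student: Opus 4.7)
The plan is to establish the equivalent inequality $a(H) \ge (\deg(H)/\deg(G)) \cdot a(G)$ by showing that \emph{every} non-identity $h \in H$ satisfies $\ind(h) \ge (\deg(H)/\deg(G)) \cdot a(G)$ in the action of $H$ on the orbit $O \subseteq \{1,\ldots,n\}^k$. Throughout I will use the point-by-point reformulation
\[
\ind(g) \;=\; \sum_{x}\bigl(1 - 1/\ell_g(x)\bigr),
\]
where $x$ ranges over the set permuted by $g$ and $\ell_g(x)$ is the length of the cycle of $g$ containing $x$; this is just $\deg - \#\{\text{orbits}\}$ rewritten using $\#\{\text{orbits of }\langle g\rangle\} = \sum_x 1/\ell_g(x)$.

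First I would record a pointwise inequality coming from the coordinate-wise action of $G^k$. For $h = (h_1,\ldots,h_k) \in H$ and $x = (x_1,\ldots,x_k) \in O$, the orbit of $x$ under $\langle h\rangle$ has length $\operatorname{lcm}(\ell_{h_1}(x_1),\ldots,\ell_{h_k}(x_k))$ and is therefore $\ge \ell_{h_i}(x_i)$ for every $i$. Consequently $1 - 1/\ell_h(x) \ge 1 - 1/\ell_{h_i}(x_i)$, and summing over $x \in O$ gives
\[
\ind(h) \;\ge\; \sum_{x \in O}\bigl(1 - 1/\ell_{h_i}(x_i)\bigr) \qquad \text{for every } i.
\]

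The next step, which I expect to be the main obstacle, is a fiber-size computation for the projection $\pi_i : O \to \{1,\ldots,n\}$. Since $\pi_i(H)=G$ by allowability and $G$ is transitive on $\{1,\ldots,n\}$, the map $\pi_i$ is surjective, and it is $H$-equivariant when $H$ acts on the target through $\pi_i$. For any $y,y'\in\{1,\ldots,n\}$, transitivity of $G$ produces $g\in G$ with $g\cdot y=y'$, and surjectivity lifts $g$ to some $h\in H$ whose action on $O$ sends $\pi_i^{-1}(y)$ bijectively onto $\pi_i^{-1}(y')$. Hence all fibers of $\pi_i$ share the common size $\deg(H)/\deg(G)$, and the previous display rewrites as
\[
\ind(h) \;\ge\; \frac{\deg(H)}{\deg(G)} \sum_{y=1}^{n}\bigl(1 - 1/\ell_{h_i}(y)\bigr) \;=\; \frac{\deg(H)}{\deg(G)}\cdot \ind(h_i).
\]

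To conclude, if $h \ne \id$ then some coordinate $h_i$ is non-trivial in $G$ (the identity of $G^k$ is the tuple of identities), so $\ind(h_i) \ge a(G)$ by definition, giving $\ind(h) \ge (\deg(H)/\deg(G))\cdot a(G)$. Taking the minimum over $h\ne\id$ yields $a(H) \ge (\deg(H)/\deg(G))\cdot a(G)$, which rearranges to the desired inequality $\deg(H)/\ind(H) \le \deg(G)/\ind(G)$. The whole argument turns on the allowability hypothesis $\pi_i(H)=G$: it is exactly what is needed both to activate the pointwise inequality on a chosen coordinate and to guarantee that the fibers of $\pi_i$ over $O$ are uniform; everything else is a mechanical manipulation of the orbit-counting formula.
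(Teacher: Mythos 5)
Your proof is correct and follows essentially the same strategy as the paper: project a non-trivial $h\in H$ to a non-trivial component $h_i\in G$, use the uniform fiber size $d=\deg(H)/\deg(G)$ of $\pi_i$ on $O$, and compare cycle structures to conclude $\ind(h)\ge d\cdot\ind(h_i)\ge d\cdot\ind(G)$. Where the paper argues informally that ``each cycle of length $r$ of $\bar g$ will decompose into at most $d$ cycles of length $r$ in the worst case,'' you make this rigorous via the point-wise formula $\ind(g)=\sum_x(1-1/\ell_g(x))$, the observation that the orbit length at $(x_1,\dots,x_k)$ is the lcm of the component orbit lengths, and an explicit equivariance argument showing all fibers of $\pi_i$ over $O$ have equal size; this tightening is welcome but does not change the underlying idea.
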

\begin{proof}
Let $H$ be an allowable permutation subgroup of $G^k$ acting on $nd$ points. If $nd<n^k$ it might be that we only see $1\leq \ell \leq k$ projections to $G$.  Suppose that $g\in H$ is a non-trivial element. This implies that
$g$ is a non-trivial element in at least one projection $\kappa: H \rightarrow G$ and
we define $\bar g:=\kappa(g)$. By the definition of $\ind(G)$ we have that $\ind(\bar g)\geq \ind(G)$.  Let us look at all possible preimages of $\bar{g}$ under $\kappa$. The index of those elements will be minimal if each cycle of length $r$ of $\bar{g}$ will decompose into $d$ different cycles of length $r$. Therefore we get for a preimage $\tilde{g}$ of ${g}$ under $\kappa$:
\[ \#\{\text{cycles of }\tilde{g}\} \leq d \# \{\text{cycles of }\bar{g}\}.\]
Therefore we get: \[\ind(\tilde{g}) = nd - \#\{\text{cycles of }\tilde{g}\} \geq nd - d\#\{\text{cycles of }\bar{g}\}\] 
\[= d(n-  \#\{\text{cycles of }\bar{g}\}) = d\cdot\ind(\bar{g}).  \]
This line is equivalent to
\[\frac{1}{\ind(\tilde{g})} \leq \frac{1}{d\cdot\ind(\bar{g})} \Leftrightarrow
\frac{nd}{\ind(\tilde{g})} \leq \frac{n}{\ind(\bar{g})} = \frac{\deg(G)}{\ind(\bar{g})}\leq \frac{\deg(G)}{\ind(G)}.
  \]
  Note that $\deg(H)=nd$ and $\ind(\tilde{g})\leq \ind(H)$ in order to finish the proof.
\end{proof}
Note that in the special case $H=G^k$ the above proof shows 
\[\frac{ \deg}{\ind}(G^k) = \frac{\deg}{\ind} (G)\]
by using $\ind(g)=\ind(G)$ implies that $\ind((g,1,\ldots,1))=\ind(G^k)$.
\begin{proof}[Proof of Theorem \ref{thm:malle-2-disc-multi}]
	We will prove that there exist constants $C_1>0$ and $C_2>0$ such that for all $k>0$
	\begin{equation}\label{count}
	\sum_{D \le X} a_D^k \ll_{k} \sum_{H} N(F,H; C_2X^{C_1 a(H)})  \stackrel{\text{Conj. B}}{=}O_{F,\epsilon,k}(X^{C_1+\epsilon}),	
	\end{equation}
		where the (finite) summation goes over all allowable permutation subgroups $H$ of $G^k$.
	Then the result follows from condition (iii) in Lemma \ref{lem:k-moments-uniformity} by letting $a_D$ be the number of $G$-extensions $E/F$ with $\Disc(E/F) = D$.
	
	For proving \eqref{count}, it suffices to find constants $C_1>0$ and $C_2>0$ for a given allowable permutation subgroup $H\leq S_{nd}$ we consider composita $E:=\prod_{i=1}^k E_i$  such that $\Disc(E/F) \le C_2X^{C_1a(H)}$ when $\Disc(E_i/F) = D \le X$ for every $i$ and $\Gal(E/F) = H$. Now we study the discriminant $\Disc(E/F)$.  Recall by Hilbert's ramification theory that the valuation
	$v_\idp(\disc(E/F)) < [E:F]$ for an at most
	tamely ramified prime ideal $\idp\subseteq O_F$. The same estimate is true for all other prime ideals of $O_F$ lying over the same prime number $p=\idp \cap \Z$. Therefore we get $v_p(|\disc(E/F)|)\leq [E:F] [F:\Q]$.

	 This implies that $\Disc(E/F) \le C_2D^{[E:\Q]}$, where $C_2$ is a bounded factor coming from wildly ramified primes. Assuming $D\leq X$ and $\deg(H)=[E:F]$ we get: \[\Disc(E/F)
	   \le C_2X^{[E:\Q]} \le C_2 X^{[F:\Q]\ind(H) \cdot (\deg/\ind)(H)} \le C_2 X^{[F:\Q]a(H)\cdot(\deg/\ind)(G) }\] by Lemma \ref{lem:deg-over-ind-uniformity}. Now we see that we can take $C_1 = [F:\Q](\deg/\ind)(G)$.
\end{proof}
\begin{corollary}\label{cor:disc}
	Let $F$ be a number field.
	Assume that Conjecture \ref{conj:Mal1} is true for all solvable groups $G$. Then Conjecture \ref{conj:disc-multi} is true for all solvable groups.
 \end{corollary}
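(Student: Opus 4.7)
The plan is to deduce the corollary as a direct application of Theorem \ref{thm:malle-2-disc-multi}, combined with the fact that solvability is preserved under all of the group-theoretic operations appearing in the definition of an allowable permutation subgroup.

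First, I would fix an arbitrary solvable transitive permutation group $G \leq S_n$ and unpack what Theorem \ref{thm:malle-2-disc-multi} requires: we must know Conjecture \ref{conj:Mal1} for every allowable permutation subgroup $H$ of $G^k \leq S_{n^k}$, for every $k \geq 1$. By Definition \ref{def:allow}, such an $H$ arises as the image of a subgroup $U \leq G^k$ in $S_O$, where $O$ is an orbit of $U$ on $\{1,\ldots,n^k\}$.

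Next, I would observe the key stability remark: since $G$ is solvable, the direct product $G^k$ is solvable for every $k \geq 1$; therefore every subgroup $U \leq G^k$ is solvable, and every homomorphic image of $U$ — in particular the image of $U$ under the permutation representation $U \to S_O$ — is solvable as well. Hence every allowable permutation subgroup $H$ of $G^k$ lies in the class of solvable transitive groups.

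By the hypothesis of the corollary, Conjecture \ref{conj:Mal1} holds for $F$ and for every solvable group, and in particular for all the allowable permutation subgroups $H$ just described. Theorem \ref{thm:malle-2-disc-multi} then yields Conjecture \ref{conj:disc-multi} for $F$ and $G$. Since $G$ was an arbitrary solvable transitive permutation group, this finishes the proof. There is no substantive obstacle here: all the analytic work has already been absorbed into Theorem \ref{thm:malle-2-disc-multi}, and the only thing to verify — that passing from $G$ to allowable permutation subgroups of $G^k$ stays inside the class of solvable groups — is immediate from closure of solvability under direct products, subgroups, and quotients.
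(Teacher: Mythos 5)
Your proof is correct and follows exactly the paper's approach: note that allowable permutation subgroups of $G^k$ stay solvable (closure under direct products, subgroups, and quotients), then invoke Theorem~\ref{thm:malle-2-disc-multi}. Nothing to add.
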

\begin{proof}
	Noting that all allowable (permutation) subgroups of $G^k$ are solvable when $G$ is solvable, the results follows directly from Theorem \ref{thm:malle-2-disc-multi}. 
	\end{proof}
In a similar way Proposition \ref{prop1} (i) can be proved for the class of solvable extensions. Here we use that subgroups of $C_\ell \wr H$ are solvable when $H$ is solvable.

\section*{Acknowledgment}
Wang is partially supported by Foerster-Bernstein Fellowship at Duke University, and would like to thank Melanie Matchett Wood for helpful conversations. The authors would like to thank for the hospitality of the Mathemati\-sches Forschungsinstitut in Oberwolfach and the organizers of the workshop Explicit Method in Number Theory
2018 where this collaboration begins. The authors would like to thank Manjul Bhargava for many helpful conversations during the time at Oberwolfach. The authors would like to thank Brandon Alberts and Gunter Malle for suggestions on an earlier draft. This project is accomplished during the Research in Pairs (RIP) program at Mathemati\-sches Forschungsinstitut in Oberwolfach in 2019, supported by the Volkswagen-Stiftung. 
\bibliographystyle{plain} 
\bibliography{lrank}
\end{document}